\documentclass[12pt]{amsart}
\usepackage[margin=1.in, centering]{geometry}
\usepackage{bm}
\usepackage{csquotes}
\usepackage{mathtools}
\usepackage{amssymb,latexsym,amsmath,extarrows, mathrsfs, amsthm,cite}
\usepackage{graphicx}
\usepackage{tikz}
\usepackage{amsmath}
\usepackage{chngcntr}
\counterwithin{equation}{section}
\usepackage[colorlinks, linkcolor=blue]{hyperref}
\usepackage{bbm}
\usepackage{fancyhdr}
\usepackage{color}
\usepackage{wasysym}
\usepackage{float}

\newtheorem{theorem}{Theorem}[section]
\newtheorem{lemma}[theorem]{Lemma}
\newtheorem{proposition}[theorem]{Proposition}

\theoremstyle{remark}

\allowdisplaybreaks

\title{On the Hyperbolic Fractional Sum of the Divisor Function
}

\author{Ling Li  }
\address{School of Mathematics, Jilin University, Changchun, 130012, People's Republic of China}
\email{liling24@mails.jlu.edu.cn}

\keywords{ Divisor function,  Asymptotic formula,  Exponential sum, Diophantine inequality}
\subjclass[2020]{11L07, 11N37, 11A25, 11D75}

\begin{document}

\begin{abstract}
Let $\tau(n)$ denote the classical divisor function. In this paper, we consider the hyperbolic fractional sum of the divisor function defined by
$$T(x) = \sum_{n_1 n_2 \leqslant x} \tau\left( \left[ \frac{x}{n_1 n_2} \right] \right) = \sum_{n \leqslant x} \tau\left( \left[ \frac{x}{n} \right] \right) \tau(n),
$$
where $[t]$ denotes the integral part of the real number $t$. By establishing new estimates for a class of three-dimensional exponential sums with  constant perturbation, we obtain an improved asymptotic formula for $T(x)$. In particular, we show that for any $\varepsilon > 0$, the error term in the asymptotic expansion of $T(x)$ is bounded by $O(x^{17/30+\varepsilon})$. This result breaks the $4/7$-barrier which corresponds to the application of the classical divisor problem conjecture $1/4+\varepsilon$.

\end{abstract}

\maketitle
\section{Introduction}

Let $\tau(n)$ denote the divisor function,  the number of positive divisors of an  integer $n\geqslant1$. A classical result in analytic number theory, established by Dirichlet, provides the following asymptotic formula
\begin{eqnarray}\label{sum tau(x)}
    \sum_{n\leqslant x}\tau(n)=x(\log{x}+2\gamma-1)+\Delta(x),
\end{eqnarray}
where $\gamma$ is the Euler-Mascheroni constant and $\Delta(x)$ denotes the error term.
While Dirichlet initially showed that $\Delta(x) \ll x^{1/2}$, the problem of determining the optimal upper bound for $\Delta(x)$ has a long history. Following contributions from various authors \cite{Huxley, Iwanice, Kolesnik, Var, Var2, Vor}, the most recent result is due to Bourgain and Watt \cite{BW}, who proved that  $\Delta(x)\ll x^{517/1648+\varepsilon}$ for any $\varepsilon > 0$.
A classical conjecture states that $$\Delta(x) \ll x^{\frac{1}{4}+\varepsilon},$$ although this remains an open problem.

Recalling the definition of $\tau(n)$, it is straightforward to observe that
$$
\sum_{d\leqslant x}\left[\frac{x}{d}\right]=\sum_{d\leqslant n}\sum_{\substack{n\leqslant x\\ d|n}}1=\sum_{n\leqslant x}\sum_{d|n}1=\sum_{n\leqslant x}\tau(n),$$
where $[t]$ is the integral part of the real number $t$.
Consequently, we may also view \eqref{sum tau(x)} as an asymptotic formula for $\sum_{n\leqslant x}[x/n]$.
Motivated by this connection, Bordell\`{e}s, Dai, Heyman,
 Pan, and Shparlinski \cite{BDHPS} generalized the investigation to consider the class of fractional sums defined by \begin{eqnarray*}\label{S_g(x)}
S_f(x):=\sum_{n\leqslant  x}f\left(\left[\frac{x}{n}\right]\right),
 \end{eqnarray*}
where $f$ is an arithmetic function. Specifically, for any arithmetic function $f$ satisfying the  condition $|f| \leqslant A\tau_k$, they proved the following asymptotic formula
$$
\sum_{n \leqslant x} f\left(\left[\frac{x}{n}\right]\right) = x \sum_{n=1}^{\infty} \frac{f(n)}{n(n+1)} + O \left( A x^{\frac{1}{2}} (\log x)^{\delta_k + \frac{\varepsilon_{k+1}(x)}{2}} \right),
$$
where $A>0$ and $\tau_k(n)$ is the $k$-fold divisor function,
\(\delta_1 = 0\) for \(k = 1\),  $\delta_k = k - 1/2$  for \(k \geqslant 2\), 
$$
\varepsilon_0(x)=1,\quad \varepsilon_k(x) = \sqrt{\frac{k \log \log \log x}{\log \log x}} \left( k - 1 + \frac{30}{\log \log \log x} \right).
$$
For a more comprehensive review of the research on $S_f(x)$, we refer the reader to \cite{B, LiMa, LM2, LWY, LWY2, MS, MS2, MW, MWZ, S, W2, W, Z2, Z, ZWG, ZWG2, ZWG3, ZW} 
and the references therein.
More recently, Karras, Li and Stucky \cite{KLS}
extended the scope of such investigations by introducing the ``hyperbolic fractional sum", which is  defined as
\begin{eqnarray*}
    T_f(x):=\sum_{n_1n_2\leqslant x}f\left(\left[\frac{x}{n_1n_2}\right]\right)=\sum_{n\leqslant x}f\left(\left[\frac{x}{n}\right]\right)\tau(n).
\end{eqnarray*}
Under the assumption of  conjecture $\Delta(x)\ll x^{1/4+\varepsilon}$ and $f(n)\ll n^{\alpha}$ for some  $\alpha\in [0,1)$, it is not hard to obtain that
\begin{eqnarray}\label{T_f(x)}
   T_f(x)=C_1(f)x\log{x}+C_2(f)x+O\left(x^{\frac{4+3\alpha}{7}+\varepsilon}\right),
\end{eqnarray}
where the constants are given by
$$
C_1(f)=\sum_{d=1}^{\infty}\frac{f(d)}{d(d+1)}, \quad C_2(f)=(2\gamma-1)C_1(f)-C_3(f), 
$$
and 
$$
C_3(f)=\sum_{d=1}^{\infty}f(d)\left(\frac{\log{d}}{d}-\frac{\log(d+1)}{d+1}\right).
$$
Remarkably, 
Karras et al.\ \cite{KLS} established the asymptotic formula \eqref{T_f(x)} unconditionally---independent of the conjecture $\Delta(x) \ll x^{1/4}$---by using an estimate for three-dimensional exponential sums due to Robert and Sargos \cite{RS}.

In this paper, we consider the hyperbolic fractional sum $T_f(x)$ in the specific case where $f(n) = \tau(n)$. By  the double large sieve inequality, van der Corput’s B-process, and new estimates for three-dimensional exponential sums with constant perturbations, we are able to improve the error term in \eqref{T_f(x)}. In particular, this result breaks the $4/7$-barrier. Our main theorem is as follows.

\begin{theorem}\label{Main Theorem}
    With notation as above, for any $x\geqslant 1$ and $\varepsilon>0$, we have
    $$
    T(x)=\sum_{n\leqslant x}\tau\left(\left[\frac{x}{n}\right]\right)\tau(n)=
    C_1x\log{x}+C_2x+O\left(x^{\frac{17}{30}+\varepsilon}\right).
    $$
\end{theorem}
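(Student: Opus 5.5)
We split the sum at a parameter $N\in[x^{1/2},x]$, to be optimized at the end, and write
$$
T(x)=\sum_{n\leqslant N}\tau\Bigl(\Bigl[\frac{x}{n}\Bigr]\Bigr)\tau(n)+\sum_{N<n\leqslant x}\tau\Bigl(\Bigl[\frac{x}{n}\Bigr]\Bigr)\tau(n)=:T_1(x)+T_2(x).
$$

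\textbf{The large-$n$ range.} In $T_2$ the value $d=[x/n]$ is at most $x/N$. We group by $d$:
$$
T_2(x)=\sum_{d\leqslant x/N}\tau(d)\sum_{\substack{x/(d+1)<n\leqslant x/d\\ n>N}}\tau(n).
$$
We then insert Dirichlet's formula \eqref{sum tau(x)} for each inner sum. The main terms produce the tail contributions to $C_1x\log x+C_2x$, up to an admissible error from truncating $\sum_d\tau(d)/(d(d+1))$ and its logarithmic analogue. The error terms are
$$
\sum_{d\leqslant x/N}\tau(d)\Bigl(\Delta\bigl(\tfrac{x}{d}\bigr)-\Delta\bigl(\tfrac{x}{d+1}\bigr)\Bigr).
$$
We do not bound these trivially. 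Instead we expand $\Delta$ via Voronoi's truncated formula, or equivalently as $-2\sum_{m\leqslant\sqrt y}\psi(y/m)+O(1)$ with $\psi(t)=\{t\}-1/2$. After dyadic splitting this reduces everything to exponential sums of the form
$$
\sum_{h\sim H}\sum_{m\sim M}\sum_{d\sim D}\tau(d)\,e\Bigl(\frac{hx}{m(d+\theta)}\Bigr),\qquad \theta\in\{0,1\}.
$$
This is exactly a three-dimensional exponential sum with a constant perturbation $\theta$. We open $\tau(d)=\sum_{d=d_1d_2}1$, which turns it into a genuinely multilinear sum.

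\textbf{The small-$n$ range.} In $T_1$ we write $\tau([x/n])=\sum_{k\ell=[x/n]}1$ and count the triples $(n,k,\ell)$ with $k\ell\leqslant x/n<k\ell+1$, weighted by $\tau(n)$. Removing the floor produces the main term together with the sums
$$
\sum_{n\sim N'}\tau(n)\Bigl(\psi\Bigl(\frac{x}{nk}\Bigr)-\psi\Bigl(\frac{x}{nk}-\frac1k\Bigr)\Bigr)
$$
and their variants. We expand each $\psi$ by Vaaler's trigonometric approximation, so the shifts $-1/k$ and $+1$ again appear as constant perturbations of the phase $hx/(n(k+\theta))$. Because the weights $\tau(n)$ and $\tau(d)$ are not smooth, we first separate variables with the double large sieve (Bombieri--Iwaniec). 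Setting $u=n_1n_2$ and $v=$ the remaining pair, this gives the bound
$$
\ll \bigl(\#\{(u,u',v,v'):|f(u,v)-f(u',v')|\leqslant\delta\}\bigr)^{1/2}\cdot(\cdots).
$$
The quadruple count is a Diophantine inequality problem. Where one variable can instead be kept smooth, we use van der Corput's B-process in that variable (Poisson summation plus stationary phase). This swaps $m$ to a dual variable and yields a three-dimensional sum whose phase has the monomial-plus-perturbation shape $X\,m^{\alpha}(d+\theta)^{\beta}h^{\gamma}$.

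\textbf{The main obstacle, and how the exponent arises.} The crux is a new estimate for
$$
\sum_h\sum_m\sum_d a_h b_m c_d\, e\Bigl(X\frac{h^{\alpha}m^{\beta}}{(d+\theta)^{\gamma}}\Bigr),
$$
uniformly in $\theta\in[0,1]$. This plays the role that Robert--Sargos \cite{RS} played in \cite{KLS}, where the phase was an exact monomial. My first attempt is to follow Robert--Sargos's spacing argument, counting solutions of
$$
\Bigl|\frac{h_1^{\alpha}m_1^{\beta}}{(d_1+\theta)^{\gamma}}-\frac{h_2^{\alpha}m_2^{\beta}}{(d_2+\theta)^{\gamma}}\Bigr|\leqslant\delta,
$$
and to show that the perturbation only costs an admissible factor when $D$ is large compared with the perturbation scale. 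For small $D$ I would instead apply the B-process in $d$ directly, where $\theta$ enters only through a harmless linear phase.

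Combining the resulting bounds $E_1(N)$ for $T_1$ and $E_2(x/N)$ for $T_2$, with trivial bounds in the extreme dyadic ranges, I expect the optimal balance at $N=x^{13/30}$ or a nearby value, giving the total error $O(x^{17/30+\varepsilon})$.
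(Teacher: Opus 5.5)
There is a genuine gap: the estimate that carries the whole proof is never supplied. Your reduction of the large-$n$ range is the paper's. You group by $d=[x/n]$, insert Dirichlet's formula, write $\Delta(y)=-2\sum_{l\leqslant\sqrt y}\psi(y/l)+O(1)$, apply Vaaler and open $\tau(d)=\sum_{d_1d_2=d}1$, arriving at $\sum_{h\sim H}h^{-1}\sum_{d_1,d_2,l}e\bigl(hx/(l(d_1d_2+\delta))\bigr)$. But the proof stops where the work begins. The bound you call the crux is never stated, let alone proved (``my first attempt is to follow Robert--Sargos\ldots''). So nothing controls $T_\Delta(x)$, and $17/30$ is asserted (``I expect'') rather than derived.

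In the paper this is the content of Theorems \ref{S_1(H,M,N)}, \ref{S_1^{dag}(H,M,N)} and Proposition \ref{proposition}. The missing idea is how the perturbation is neutralised. In the double large sieve, $(d_1,d_2)$ enters only through the integer $d_1d_2+\delta$. The spacing count therefore collapses to a one-variable count at the cost of a divisor bound, $\mathscr{B}_5\ll(MN)^{2+\varepsilon}\bigl((MN)^{-1}+X^{-1}\bigr)$. The smooth variable $l$ is the inner sum and is controlled by the Robert--Sargos count $\mathscr{B}_3$. This handles $L^2D\leqslant x^{14/15}$.

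For $x^{14/15}<L^2D\leqslant x$, the paper applies the B-process in $l$. It then applies Robert--Sargos (Lemma \ref{three-dimensional exponential sums}) to the dual sum in $(r,d_1,d_2)$, taking as inner variable the smaller of $d_1,d_2$. Your alternative of a B-process in $d$ for small $D$ is not available. The variable $d$ carries $\tau(d)$, and your crux sum even has arbitrary weights $c_d$.

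Your parameter choice also points the wrong way. The small-$n$ range needs no analysis. Trivially $T_1\ll Nx^{\varepsilon}$, and the main term comes from completing the $d$-series in $T_2$, which costs the same $O(Nx^{\varepsilon})$. Your reduction there is also off. The term $\psi(x/(nk)-1/k)$ has phase $hx/(nk)-h/k$, which is an additive twist $e(-h/k)$, not a perturbed denominator $hx/(n(k+\theta))$.

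The real balance is between $N$ and the exponential-sum terms over $D\leqslant x/N$. The paper takes $N=x^{17/30}$ and $H=x^{3/20}$, so only $D\leqslant x^{13/30}$ has to be handled. Your $N=x^{13/30}$ contradicts your own requirement $N\geqslant x^{1/2}$. It also pushes $D$ up to $x^{17/30}$. There the $H$-independent term $x^{7/30}D^{3/4}$ of Proposition \ref{proposition} is already $x^{79/120}>x^{17/30}$.
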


The proof of Theorem \ref{Main Theorem}
is established through the study of
 a class of three-dimensional exponential sums with constant perturbations. Such sums generalize the classical estimates for three-dimensional exponential sums  defined by
 $$
S_0(H,M,M):=\sum_{h\sim H}\sum_{m\sim M}\sum_{n\sim N}a(h,m)b(n)e\left(X\frac{h^{\beta}m^{\gamma}n^{\alpha}}{H^{\beta}M^{\gamma}N^{\alpha}}\right),
$$
where $e(t)=e^{2i\pi t}$, $H, M, N \in \mathbb{Z}^+$, $X>1$. The complex numbers $a(h,m)$ and $b(n)$ satisfy $|a(h, m)|, |b(n)| \leqslant 1$, and $\alpha, \beta, \gamma$ are fixed real numbers.
 For $\alpha(\alpha - 1)\beta\gamma \neq 0$, 
 Fouvry and Iwaniec \cite{FI} 
established that
 $$
S_0(H,M,N)\ll (HMN)^{1+\varepsilon}\Bigg\{\bigg(\frac{X}{HMN^2}\bigg)^{\frac{1}{4}}+\frac{1}{N^{\frac{3}{10}}}+\frac{1}{(HM)^{\frac{1}{4}}}+\frac{N^{\frac{1}{10}}}{X^{\frac{1}{4}}}\Bigg\}
$$
 by applying the double large sieve inequality of Bombieri and Iwaniec \cite{BI}.
 Subsequently, this upper bound was improved by Sargos and Wu \cite{SW} for $\alpha(\alpha-1)(\alpha-2)\beta\gamma\neq1$, and by Robert and Sargos \cite{RS} for $\alpha(\alpha-1)\beta\gamma\neq1$, respectively.
More recently, Liu, Wu, and Yang \cite{LWY} generalized $S_0(H,M,N)$ by considering a class of three-dimensional exponential sums with constant perturbation $\delta$
\begin{eqnarray}\label{S-delta}
S_{\delta}(H,M,N):=\sum_{h\sim H}\sum_{m\sim M}\sum_{n\sim N}a(h,m)b(n)e\left(X\frac{M^\beta N^\gamma}{H^\alpha}\frac{h^\alpha}{m^\beta n^\gamma +\delta}\right), 
\end{eqnarray}
where $\alpha>0$, $\beta>0$, $\gamma>0$ and $\delta \in\mathbb{R}$ is a constant. 
They proved that
\begin{eqnarray*}\label{LWY}
    &&S_\delta(H,M,N)\ll \bigg( (XHMN)^{\frac{1}{2}}+(HM)^{\frac{1}{2}}N+HMN^{\frac{1}{2}}+X^{-\frac{1}{2}}HMN\bigg)X^{\varepsilon},\\
     && S_\delta(H,M,N)\ll \bigg((X^\kappa H^{2+\kappa} M^{2+\kappa} N^{1+\kappa+\lambda})^{\frac{1}{2+2\kappa}}+HMN^{\frac{1}{2}}
+(HM)^{\frac{1}{2}}N+X^{-\frac{1}{2}}HMN\bigg)X^{\varepsilon},
\end{eqnarray*}
hold uniformly for $M\geqslant 1$, $N\geqslant1$, $H\leqslant M^{\beta-1}N^{\gamma}$, and $|\delta|\leqslant1/\varepsilon$, where $(\kappa, \lambda)$ is an exponent pair. 
By choosing the exponent pair
$(\kappa, \lambda) = (1/2, 1/2)$, they improved a previous result of Bordell\`{e}s \cite{B}. In particular, they improved the error term in the following asymptotic formula
\begin{eqnarray}\label{LWY-Mangoldt}
S_\Lambda (x)= \sum_{n \leqslant x} \Lambda\left(\left[ \frac{x}{n} \right]\right) = x \sum_{d =1}^\infty \frac{\Lambda(d)}{d(d+1)} + \text{error term}
\end{eqnarray}
from $x^{97/203+\varepsilon}$ to $x^{9/19+\varepsilon}$.
Based on these developments, Li and Ma \cite{LiMa}
investigated the perturbed sum $S_\delta(H,M,N)$ under the condition $X \leqslant (8\delta)^{-1} KM^{\beta}N^{\gamma}$ for $K\geqslant1$.  They proved the following bound
\begin{eqnarray*}\label{LM}
    S_\delta(H,M,N)\ll (HMN)^{1+\varepsilon}\Bigg\{\bigg(\frac{KX}{HMN^2}\bigg)^{\frac{1}{4}}+\bigg(\frac{K^2}{HM}\bigg)^{\frac{1}{4}}+\bigg(\frac{K}{N}\bigg)^{\frac{1}{2}}+\frac{K}{X^{\frac{1}{2}}}\Bigg\},
\end{eqnarray*}
which yielded a further refinement of the error term in the asymptotic formula \eqref{LWY-Mangoldt}.

In this paper, we also consider a class of three-dimensional exponential sums with constant perturbation, similar in structure to those given in \eqref{S-delta}. 
Let 
\begin{eqnarray}\label{type1}
    S(H,M,N):=\sum_{m\sim M}\sum_{n\sim N}a(m,n)\sum_{h\sim H}b(h)e\left(X\frac{(mn+\delta)^{\beta}}{M^\beta N^\beta}\frac{h^\alpha}{H^ \alpha}\right)
\end{eqnarray}
 where $H$, $M$, $N$ are positive integers, $X>1$ is a real number, $a(m,n)$ and $b(h)$ are complex numbers
 such that $|a(m,n)|\leqslant 1$ and $|b(h)|\leqslant 1$,   moreover $\alpha$, $\beta$ are fixed real numbers such that $\alpha(\alpha-1)\beta\neq1$ and $\delta\in \mathbb{R}$ is a constant satisfying $|\delta| \leqslant MN$.
By  the double large sieve inequality,  we obtain the following upper bound for $S(H, M, N)$.
\begin{theorem}\label{S_1(H,M,N)}
    Under the previous notation, for any $\varepsilon>0$, we have
    $$
S(H,M,N)\ll (HMN)^{1+\varepsilon}\left\{\left(\frac{X}{MNH^2}\right)^{\frac{1}{4}}+\frac{1}{(MN)^{\frac{1}{4}}}+\frac{1}{H^{\frac{1}{2}}}+\frac{1}{X^{\frac{1}{2}}}\right\}.
$$
\end{theorem}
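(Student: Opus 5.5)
Following Fouvry--Iwaniec, we group the variables as $u=mn$ and $h$ and apply the double large sieve of Bombieri--Iwaniec. We may assume $X\leqslant HMN$; otherwise the term $(HMN)^{1+\varepsilon}X^{-1/2}$ already exceeds the trivial bound. The phase is $X\,\phi(u)\psi(h)$ with
$$\phi(u)=\frac{(u+\delta)^\beta}{(MN)^\beta},\qquad \psi(h)=\frac{h^\alpha}{H^\alpha}.$$
The perturbation $\delta$ only shifts the argument of $\phi$. Since $|\delta|\leqslant MN$, we have $u+\delta\in[-MN,\,5MN)$, which may contain $0$. To avoid this degeneracy, we subdivide according to the size of $u+\delta$ (dyadically, and separately for each sign). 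On each piece, $v:=u+\delta$ is of a fixed size $V\ll MN$, and $\phi$ is a monotone power of $v$.

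The first step is to collect the pairs $(m,n)$ with the same product $u$. Put $c(u)=\sum_{mn=u}a(m,n)$, so that $|c(u)|\leqslant\tau(u)\ll (MN)^\varepsilon$. The sum becomes
$$\sum_{u}c(u)\sum_{h\sim H}b(h)\,e\bigl(X\phi(u)\psi(h)\bigr).$$
This is a bilinear form in the points $x_u=\phi(u)$ and $y_h=X\psi(h)$. The double large sieve then gives
$$|S|^2\ll (1+X)\,\mathcal{B}_1\mathcal{B}_2,\qquad \mathcal{B}_1=\sum_{|\phi(u)-\phi(u')|\leqslant 1/X}|c(u)c(u')|,\qquad \mathcal{B}_2=\sum_{|\psi(h)-\psi(h')|\leqslant 1/X}1.$$
Here the ranges of $x_u$ and $y_h$ are $\asymp 1$ and $\asymp X$ (after the harmless normalization on each piece).

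The second step is to count the close pairs. For $\mathcal{B}_2$, the mean value theorem gives $|\psi(h)-\psi(h')|\asymp|h-h'|/H$, so $\mathcal{B}_2\ll H+H^2/X$. For $\mathcal{B}_1$ on a piece with $|u+\delta|\asymp V$, we have $|\phi(u)-\phi(u')|\asymp (V/MN)^{\beta}\,|u-u'|/V$. If $V\asymp MN$, this gives
$$\mathcal{B}_1\ll (MN)^{\varepsilon}\bigl(MN+(MN)^2/X\bigr).$$
Substituting yields
$$|S|^2\ll (HMN)^{\varepsilon}\,X\,\bigl(MN+(MN)^2/X\bigr)\bigl(H+H^2/X\bigr).$$
Expanding the product produces four terms: $XHMN$, $H^2MN$, $H(MN)^2$, and $(HMN)^2X^{-1}$. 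Their square roots are
$$(XHMN)^{1/2},\qquad H(MN)^{1/2},\qquad H^{1/2}MN,\qquad HMN\,X^{-1/2},$$
which are exactly $(HMN)\bigl(X/(HMN)\bigr)^{1/2}$, $HMN(MN)^{-1/2}$, $HMN\,H^{-1/2}$ and $HMN\,X^{-1/2}$.

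This is close to the target, but the first term is $(X/HMN)^{1/2}$ rather than $(X/(MNH^2))^{1/4}$, and the second is $(MN)^{-1/2}$ where the claim has $(MN)^{-1/4}$. In the Fouvry--Iwaniec argument the $1/4$ powers arise because one first applies Cauchy--Schwarz and Weyl differencing in one variable before the double large sieve. I would therefore follow that route. First apply Cauchy--Schwarz over $u$, and then difference in $h$ with a shift parameter $Q\leqslant H$ (the van der Corput $A$-process). This produces phases
$$X\phi(u)\bigl(\psi(h+q)-\psi(h)\bigr)\approx \frac{Xq}{H}\,\phi(u)\,\psi'(h)\cdot H.$$
Then apply the double large sieve to the pairs $(u,(h,q))$, with the $h$-points $Xq\,\psi_q(h)$ lying in a range of size $XQ/H$. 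Optimizing $Q$ should give $(X/(MNH^2))^{1/4}$ together with $(MN)^{-1/4}$, $H^{-1/2}$ and $X^{-1/2}$. The condition $\alpha(\alpha-1)\beta\neq 0$ guarantees that after differencing, $\psi_q(h)=\bigl(\psi(h+q)-\psi(h)\bigr)/q$ still has derivative of exact order, so the spacing count for the new points is again $\ll H+H^2Q/(XQ)$.

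The main obstacle I expect is the spacing count $\mathcal{B}_1$ for $\phi(u)$ near the degeneracy $u+\delta\approx 0$, which is possible when $\delta$ is negative and close to $-MN$ in size. On a piece with $|u+\delta|\asymp V\ll MN$, the spacing of the $\phi$-values changes. However, the number of $u$ in such a piece is only $\ll V$, and renormalizing $X$ to $X(V/MN)^{\beta}$ gives a bound that, when $\beta>0$, is monotone in $V$. Summing over the $O(\log MN)$ dyadic pieces costs only $(MN)^\varepsilon$. If $\beta<0$, the pieces with small $V$ have a larger frequency. In that case I would bound them trivially by $HV$, choosing the cutoff $V_0$ so that $HV_0$ is dominated by the stated terms, and treat the pieces with $V\geqslant V_0$ as above.
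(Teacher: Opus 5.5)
\textbf{Verdict.} Your proposal has a genuine gap in the range $X>MN$: the spacing count you claim after differencing is wrong, and without the right count the exponents are not reached.

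\textbf{What works.} Your first computation, the direct double large sieve on $u=mn$ and $h$, is correct. It already settles the case $X\leqslant MN$: there $(X/(HMN))^{1/2}\leqslant H^{-1/2}$ and $(MN)^{-1/2}\leqslant X^{-1/2}$. This is exactly the paper's second case.

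\textbf{Where it fails.} The claim $\mathcal{B}_2\ll H+H^2Q/(XQ)$ cannot hold in general for the joint family of points $y_{h,q}=X(\psi(h+q)-\psi(h))$. There are $\asymp HQ$ such points, so the diagonal alone contributes $\gg HQ$. The fact that $\psi_q$ has a derivative of exact order (this is where $\alpha(\alpha-1)\neq0$ enters) only controls coincidences with $q=q'$. For each triple $(q,q',h')$ the mean value theorem bounds the number of admissible $h$ by $1+H^2/(Xq)$. Summing gives only $\mathcal{B}_2\ll Q^2H+QH^3/X$. Applying the double large sieve separately for each $q$ leads to the same outcome. Either way, Weyl differencing then yields
\[
\frac{|S|^2}{(HMN)^{2+\varepsilon}}\ll \frac1Q+\frac{(XQ)^{1/2}}{H(MN)^{1/2}}+\frac1{(MN)^{1/2}}+\frac1{H^{1/2}}+\Bigl(\frac{H}{XQ}\Bigr)^{1/2}.
\]
The best choice, $Q\asymp(H^2MN/X)^{1/3}$, gives only $(X/(MNH^2))^{1/6}$ and $H^{-1/4}$, not the claimed $1/4$- and $1/2$-powers. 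So optimizing $Q$ does not deliver the theorem.

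\textbf{The missing ingredient.} You need a genuinely arithmetic spacing estimate for the differenced points, namely $\mathcal{B}_2\ll H^{\varepsilon}(HQ+H^3Q/X)$. With that, the optimal shift is $Q=H$, which means no shift at all: you simply open $|\sum_h|^2=\sum_{h_1,h_2}$. The required count is then exactly the Robert--Sargos four-variable bound of Lemma \ref{lemma:l1-4}:
\[
\#\Bigl\{h_i\sim H:\ \Bigl|\frac{h_1^\alpha-h_2^\alpha-h_3^\alpha+h_4^\alpha}{H^\alpha}\Bigr|\leqslant\frac1X\Bigr\}\ll H^{4+\varepsilon}\Bigl(\frac1{H^2}+\frac1X\Bigr).
\]
This is a deep result and does not follow from derivative bounds.

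\textbf{The paper's route for $X>MN$.} Apply Cauchy--Schwarz over $(m,n)$ and expand the square in $(h_1,h_2)$. Then use Lemma \ref{double large sieve inequality} with $u(m,n)=(mn+\delta)^\beta/(MN)^\beta$ and $v(h_1,h_2)=(h_1^\alpha-h_2^\alpha)/H^\alpha$. The $(m,n)$-count is bounded by the divisor bound plus the mean value theorem, and the $h$-count by Lemma \ref{lemma:l1-4}. This gives
\[
|S|^2\ll (HMN)^{2+\varepsilon}X^{1/2}\Bigl(\frac1H+\frac1{X^{1/2}}\Bigr)\Bigl(\frac1{(MN)^{1/2}}+\frac1{X^{1/2}}\Bigr),
\]
which is the stated bound. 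The leftover $X^{-1/4}$ is absorbed by $(MN)^{-1/4}$ because $X>MN$.

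\textbf{Minor points.}
\begin{itemize}
\item Your opening reduction to $X\leqslant HMN$ is misjustified, since the $X^{-1/2}$ term is small when $X$ is large. The correct trivial range is $X\geqslant MNH^2$, where the first term exceeds $1$.
\item Since $mn\geqslant (M+1)(N+1)$ and $\delta\geqslant -MN$, you always have $mn+\delta\geqslant M+N+1>0$, so no sign change occurs. However, $mn+\delta$ can still be small compared with $MN$. The paper's counting lemma also passes over this silently; it is harmless in its application, where $\delta\in\{0,1\}$.
\end{itemize}
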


Furthermore, let the maximum partial sum of a sequence of complex numbers $\{z_n\}_{n=1}^N$ be defined by 
$$
\left|\sum_{1\leqslant n\leqslant N}z_n\right|^*=\max_{1\leqslant N_1\leqslant N_1\leqslant N}\left|\sum_{n=N_1}^{N_2}z_n\right|.
$$
 Applying Theorem \ref{S_1(H,M,N)}, we derive the following result.

\begin{theorem}\label{S_1^{dag}(H,M,N)}
  Let
  $$
  S^{*}(H,M,N):=\sum_{m\sim M}\sum_{n\sim N}\left|\sum_{h\sim H}e\left(X\frac{(mn+\delta)^\beta }{M^\beta N^\beta }\frac{h^\alpha}{H^\alpha}\right)\right|^{*}.
  $$
With the same notation as in Theorem \ref{S_1(H,M,N)}, we have
$$S^{*}(H,M,N)\ll(HMN)^{1+\varepsilon}\left\{\left(\frac{X}{MNH^2}\right)^{\frac{1}{4}}+\frac{1}{H^{\frac{1}{2}}}+\frac{1}{X}\right\}.$$
\end{theorem}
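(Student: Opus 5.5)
We first remove the maximum over partial sums, then apply Theorem~\ref{S_1(H,M,N)}, and finally clean up the terms that fail to beat the trivial bound. The idea is that the term $(MN)^{-1/4}$ of Theorem~\ref{S_1(H,M,N)} can be dropped, and $X^{-1/2}$ improved to $X^{-1}$, by treating small $X$ separately.

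\textbf{Step 1: removing the maximum.} For each pair $(m,n)$ choose the maximizing interval $[N_1(m,n),N_2(m,n)]\subset(H,2H]$. Detect it with the standard Fourier completion:
$$\mathbf 1_{[N_1,N_2]}(h)=\int_{-1/2}^{1/2}\Big(\sum_{N_1\le k\le N_2}e(-kt)\Big)e(ht)\,dt .$$
The inner kernel is bounded by $\min(H,\|t\|^{-1})$, and
$$\int_{-1/2}^{1/2}\min(H,\|t\|^{-1})\,dt\ll\log H .$$
This gives
$$S^*(H,M,N)\ll \log H\cdot \sup_{t}\Big|\sum_{m,n}a_t(m,n)\sum_{h\sim H}e(ht)\,e\Big(X\tfrac{(mn+\delta)^\beta}{(MN)^\beta}\tfrac{h^\alpha}{H^\alpha}\Big)\Big|.$$
Here $|a_t(m,n)|\le1$ absorbs the unimodular phase and the normalization. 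The factor $e(ht)$ is taken as $b(h)$, so the sum is exactly of the type \eqref{type1}. Theorem~\ref{S_1(H,M,N)} then yields
$$S^*\ll (HMN)^{1+\varepsilon}\Big\{\Big(\tfrac{X}{MNH^2}\Big)^{1/4}+(MN)^{-1/4}+H^{-1/2}+X^{-1/2}\Big\}.$$

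\textbf{Step 2: eliminating the $(MN)^{-1/4}$ term.} We split according to the size of $X$ relative to $H$.

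If $X\ge H^2$, then
$$\Big(\frac{X}{MNH^2}\Big)^{1/4}\ge (MN)^{-1/4},$$
so the term is absorbed. In this range we also have $X^{-1/2}\le H^{-1}\le H^{-1/2}$, so the $X^{-1/2}$ term is absorbed as well.

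If $X<H^2$, we instead bound each inner partial sum in $h$ by a one-dimensional estimate. The phase has $h$-derivative of size $X/H$, and its second derivative has size $\asymp X/H^2$; this uses $\alpha(\alpha-1)\ne0$, which we assume is implied by the stated hypothesis.
\begin{itemize}
\item When $X\le H$, the first derivative is at most a small constant. Kusmin--Landau (partial sums are harmless) gives $\ll H/X$ per pair $(m,n)$. Summing over $(m,n)$ gives $(HMN)X^{-1}$, which is the $1/X$ term.
\item When $H<X<H^2$, van der Corput's second-derivative test gives
$$\ll H(X/H^2)^{1/2}+(X/H^2)^{-1/2}=X^{1/2}+HX^{-1/2}\ll X^{1/2}.$$
The maximal partial sums obey the same bound. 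We need this to be $\ll H^{1/2}$, but $X^{1/2}>H^{1/2}$ in this range.
\end{itemize}

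\textbf{Step 3: the main obstacle, the middle range $H<X<H^2$.} The second-derivative test alone does not suffice here, so we would interpolate between it and Theorem~\ref{S_1(H,M,N)}.
\begin{itemize}
\item The Theorem~\ref{S_1(H,M,N)} bound is already acceptable whenever $(MN)^{-1/4}\le (X/(MNH^2))^{1/4}+H^{-1/2}$. This holds if $MN\ge H^2$, since then $(MN)^{-1/4}\le H^{-1/2}$.
\item So the bad case is $MN<H^2$ together with $X<H^2$. There we would first try the B-process, i.e.\ Poisson summation in $h$. It transforms the $h$-sum into a sum of length $\asymp X/H<H$ with a phase of the same monomial shape in $(mn+\delta)$. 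We would then reapply the double large sieve, via Theorem~\ref{S_1(H,M,N)} with the roles rescaled, to the transformed sum.
\end{itemize}
The delicate point is that the B-process generates partial-sum and error terms that must be controlled within $(HMN)H^{-1/2}$. If this bookkeeping fails, the fallback is to verify that in the regime where the statement is actually used, $(MN)^{-1/4}$ is dominated by the other terms.
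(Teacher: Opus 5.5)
\textbf{Verdict: there is a genuine gap.} Your extreme ranges match the paper. For $X\le cH$, Kusmin--Landau gives $HMN/X$. For $X\ge H^2$, removing the maximum and applying Theorem~\ref{S_1(H,M,N)} works, since there $(MN)^{-1/4}\le (X/(MNH^2))^{1/4}$ and $X^{-1/2}\le H^{-1/2}$. The gap is the middle range $H\ll X\ll H^2$, which is the only nontrivial part of the statement. You name the right strategy (B-process in $h$, then Theorem~\ref{S_1(H,M,N)} on the dual sum) but do not carry it out, and you doubt that it closes. Your fallback of checking only the regime where the result is later used would not prove the theorem as stated.

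What is missing is short. First, you need a B-process compatible with maximal partial sums whose dual range does not depend on $(m,n)$ or on the truncation point. This matters because Theorem~\ref{S_1(H,M,N)} requires a fixed $l$-range and coefficients $b(l)$ independent of $(m,n)$, whereas a naive stationary-phase dual sum runs over $f'(a)\leqslant r\leqslant f'(b)$ with endpoints depending on both. The paper uses Lemma~\ref{dependence} for this, applied with $x(m,n)=X(mn+\delta)^\beta/(MN)^\beta\asymp X$, which gives
\[
S^*(H,M,N)\ll \frac{H}{X^{1/2}}\sum_{m\sim M}\sum_{n\sim N}\Bigl|\sum_{L\leqslant l\leqslant L_1}e\Bigl(Y\frac{(mn+\delta)^{\tilde{\beta}}}{(MN)^{\tilde{\beta}}}\Bigl(\frac{l}{L}\Bigr)^{\overline{\alpha}}\Bigr)\Bigr|^{*}+MNH^{1/2}.
\]
Here $L\asymp L_1\asymp X/H$, $|Y|\asymp X$, $\tilde{\beta}=\beta/(1-\alpha)$ and $\overline{\alpha}=\alpha/(\alpha-1)$, so $\overline{\alpha}(\overline{\alpha}-1)\tilde{\beta}\neq 0$ and the dual sum has the same shape. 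The B-process error is $MNH^{1/2}=HMN\cdot H^{-1/2}$, already within budget, so there is no delicate bookkeeping.

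Second, remove the maximum again with Lemma~\ref{Z*} and apply Theorem~\ref{S_1(H,M,N)} with $(H,X)$ replaced by $(L,Y)$. Up to a factor $(HMN)^{\varepsilon}$, this gives
\[
H^{1/2}X^{1/4}(MN)^{3/4}+X^{1/2}(MN)^{3/4}+H^{1/2}MN+MN .
\]
The first term is exactly $HMN\,(X/(MNH^2))^{1/4}$, and the last two are $\ll HMN\cdot H^{-1/2}$. The second term is where the troublesome $(MN)^{-1/4}$ loss ends up. It equals $HMN\,(X^2/(MNH^4))^{1/4}$, which is at most $HMN\,(X/(MNH^2))^{1/4}$ precisely because $X\leqslant H^2$. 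So shortening the $h$-sum from length $H$ to $X/H$, at the cost of the factor $H/X^{1/2}$, converts the bad term into an acceptable one throughout $H\ll X\ll H^2$. This also makes your extra split according to $MN\gtrless H^2$ unnecessary. This verification is the substance of the middle case; without it the proposal does not establish the stated bound.
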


\noindent{\bf{Notations.}}  
 Let $[x]$ denote the integral part of $x$,  $\{x\} = x - [x]$ represent its fractional part, and  $\|x\|$ to denote the distance of $x$ from the nearest integer. We write $e(x) = e^{2\pi i x}$ for brevity and $d \sim D$ is shorthand for $D < d \leqslant 2D$. The Landau–Vinogradov symbols $\ll, \gg$, and $O$ are used with their standard meanings, and $f(x) \asymp g(x)$ signifies that $f(x) = O(g(x))$ and $g(x) = O(f(x))$ simultaneously. And $\sideset{}{^*}\sum_{a \leqslant n \leqslant b}$ indicates that the terms with $n=a$ or $n=b$ (if they are integers) are to be halved. Throughout this paper, $\varepsilon$ always denotes an arbitrarily small positive constant.

 \section{Preliminaries}
In this section, we provide the preliminary lemmas required to prove Theorems \ref{S_1(H,M,N)} and \ref{S_1^{dag}(H,M,N)}. Following these preparations, we will proceed to the proofs of Theorems \ref{S_1(H,M,N)} and \ref{S_1^{dag}(H,M,N)}, while the more intricate proof of Theorem \ref{Main Theorem} is reserved for the end of the paper.

\subsection{Diophantine inequality}

\leavevmode \\

Here, we record several lemmas concerning Diophantine inequalities that will be instrumental in the proof of Theorem \ref{S_1(H,M,N)}. The first lemma, which originates from the double large sieve inequality, provides a bound for bilinear exponential sums.

\begin{lemma}\cite[Lemma 8]{RS}\label{double large sieve inequality}
   Set
   $$
   S=\sum_{k=1}^{K}\sum_{l=1}^{L}a(k)b(l)e\bigg(Xu(k)v(l)\bigg),
   $$
   where $K$ and $L$ are positive integers; 
   $a(k)$ and $b(l)$ are complex numbers with modulus at most one for $k=1,\ldots,L$; 
   $u(k)$ and $v(l)$ are real number such that $u(k)\ll 1$ and $v(l)\ll 1$, for $k=1,\ldots,K$ and $l=1,\ldots, L$. 
   $X$ is a real number, $X\geqslant 1$. Define $\mathscr{B}_1$ and $\mathscr{B}_2$ as
   \begin{eqnarray*}
   \mathscr{B}_1=\sum_{\substack{1\leqslant k_1,k_2\leqslant K\\|u(k_1)-u(k_2)|\leqslant \frac{1}{X}}}1, \quad \quad \mathscr{B}_2=\sum_{\substack{1\leqslant l_1,l_2\leqslant L\\|u(l_1)-u(l_2)|\leqslant \frac{1}{X}}}1.
   \end{eqnarray*}%
   Then, we  have
   $$
   S\ll X^{\frac{1}{2}}\mathscr{B}_1^{\frac{1}{2}}\mathscr{B}_2^{\frac{1}{2}}.
   $$
\end{lemma}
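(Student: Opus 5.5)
The plan is to follow the classical argument of Bombieri and Iwaniec. It rests on a smoothed Fourier duality and the Cauchy--Schwarz inequality, with no arithmetic input. (In the statement, $\mathscr{B}_2$ should be read with $v(l_1)-v(l_2)$ in place of $u(l_1)-u(l_2)$.) Since $u(k),v(l)\ll 1$, fix a constant $C$ with $|u(k)|,|v(l)|\leqslant C$. Introduce the measures $\mu=\sum_k a(k)\delta_{u(k)}$ and $\nu=\sum_l b(l)\delta_{v(l)}$, so that $S=\iint e(Xst)\,d\mu(s)\,d\nu(t)$. Fix a nonnegative even function $\phi\in L^1(\mathbb{R})$ with $\phi\geqslant 1$ on $[-2C,2C]$ whose Fourier transform $\hat\phi$ is supported in $[-1,1]$; a suitably dilated Fejér kernel $\left(\frac{\sin \pi t}{\pi t}\right)^2$ works.

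\textbf{Step 1: decouple the variables.} Using that $\nu$ lives on $[-C,C]$, write $e(Xst)$ through the Fourier integral of the cutoff in $t$:
$$
S=\int_{\mathbb{R}} F(y)\,\overline{G(y)}\,\Phi(y)\,dy ,
$$
where $F(y)=\sum_k a(k)e(yu(k))$ is evaluated near $y=Xt$, and $G(y)=\sum_l \overline{b(l)}\,e(y v(l)/X)$ is a rescaled dual sum. Here $\Phi\geqslant 0$ is a weight of total mass $\asymp 1$ essentially supported on $|y|\ll X$. Concretely, one inserts $1\leqslant \phi(t)$ and writes $\phi(t)$ as $\int \hat\phi(\xi)e(\xi t)\,d\xi$. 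Cauchy--Schwarz then gives
$$
|S|^2\leqslant \left(\int |F(y)|^2\Phi_1(y)\,dy\right)\left(\int |G(y)|^2\Phi_2(y)\,dy\right),
$$
with weights $\Phi_1,\Phi_2\geqslant0$ that are dilates of $\phi$ at scales $X$ and $1$ respectively. The normalisation must be arranged so that the product of the two scales is $\asymp X$.

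\textbf{Step 2: evaluate each factor by opening the square.} We have
$$
\int |F(y)|^2\phi(y/Y)\,dy=Y\sum_{k_1,k_2}a(k_1)\overline{a(k_2)}\,\hat\phi\big(Y(u(k_1)-u(k_2))\big).
$$
Since $\hat\phi$ is bounded and vanishes outside $[-1,1]$, this is $\ll Y\,\#\{(k_1,k_2): |u(k_1)-u(k_2)|\leqslant 1/Y\}$. Taking $Y\asymp X$ in the $u$-factor produces $X\mathscr{B}_1$. The $v$-factor, at scale $\asymp 1$ in the rescaled variable $v/X\cdot X$, produces $\ll \#\{(l_1,l_2):|v(l_1)-v(l_2)|\leqslant 1/X\}=\mathscr{B}_2$ after undoing the rescaling. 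The product of the two factors is then $\ll X\mathscr{B}_1\mathscr{B}_2$, which gives $S\ll X^{1/2}\mathscr{B}_1^{1/2}\mathscr{B}_2^{1/2}$.

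\textbf{Main obstacle.} The delicate point is Step 1. One must choose the kernel and scalings so that three things hold at once: the cutoff genuinely dominates $1$ on the supports $[-C,C]$; the Fourier transforms have compact support, so that the diagonal counts come with window $1/X$ on both sides rather than $1/X$ on one side and $1$ on the other; and the scale factors multiply to exactly $X$. I would first try the symmetric choice, splitting $X=X^{1/2}\cdot X^{1/2}$ by rescaling $u\mapsto X^{1/2}u$ and $v\mapsto X^{1/2}v$. I would then check that the resulting windows $X^{-1/2}$ versus $X^{-1}$ can be converted to $1/X$ on both sides at the cost only of the implied constant, using that the weights are compactly supported in frequency. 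If this bookkeeping fails, I would fall back on quoting Bombieri--Iwaniec's formulation directly, since the lemma is in any case cited from \cite[Lemma 8]{RS}.
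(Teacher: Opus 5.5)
Your reading of $\mathscr{B}_2$ (with $v(l_1)-v(l_2)$ in place of $u(l_1)-u(l_2)$) is correct. The paper gives no argument of its own here: it quotes Lemma~8 of Robert--Sargos. That is the Bombieri--Iwaniec double large sieve applied to the points $Xu(k)$ (range $\ll X$) and $v(l)$ (range $\ll 1$), with the factor $1+X\cdot 1\ll X$ and the windows adjusted at constant cost. Your last-resort fallback is therefore exactly the paper's route.

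Your from-scratch argument, however, has a genuine gap in Step~1. For exponential sums in a common variable,
$\int F(y)\overline{G(y)}\Phi(y)\,dy=\sum_{k,l}a(k)b(l)\check\Phi\big(u(k)-v(l)/X\big)$ with $\check\Phi(\eta)=\int\Phi(y)e(y\eta)\,dy$. This depends only on the \emph{difference} $u(k)-v(l)/X$, so it can never equal $\sum_{k,l}a(k)b(l)e(Xu(k)v(l))$.

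Nor can you ``insert $1\leqslant\phi(t)$'' into a complex bilinear form. A majorant may only be inserted inside a nonnegative quantity such as $\int|\cdot|^2$. You also cannot upgrade it to an identity: an integrable function with compactly supported Fourier transform is entire, so it cannot equal $1$ on $[-C,C]$.

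The numerology of Step~2 does not close either. Your $G$ has frequencies $v(l)/X$, so a weight at scale $1$ separates the $v(l)$ only to within $O(X)$. Resolving them to $1/X$ needs scale $X^2$, which gives $X^3$ instead of $X$.

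The symmetric fallback fails as well. The number of pairs with $|u(k_1)-u(k_2)|\leqslant c/X$ is at most $(4c+3)\mathscr{B}_1$: cut the line into intervals of length $1/(2X)$ and apply Cauchy--Schwarz. But this loss is genuinely of size $c$. With the $u(k)$ spaced by $2/X$, passing from window $X^{-1/2}$ to $X^{-1}$ costs a factor $\asymp X^{1/2}$.

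The missing idea is that the decoupling must be asymmetric and must use band-limitation on the frequency side.
\begin{itemize}
\item Write $S=\sum_l b(l)F(Xv(l))$ with $F(y)=\sum_k a(k)e(yu(k))$, whose frequencies lie in $[-C,C]$.
\item Take $\kappa$ rapidly decreasing with $\hat\kappa\equiv1$ on $[-C,C]$, e.g.\ $\hat\kappa$ smooth and supported in $[-2C,2C]$. Then exactly $F(Xv(l))=\int F(y)\kappa(Xv(l)-y)\,dy$, so $S=\int F(y)K(y)\,dy$ with $K(y)=\sum_l b(l)\kappa(Xv(l)-y)$.
\item Apply Cauchy--Schwarz with the positive weight $W(y)=(1+y^2/X^2)^{-1}$, whose Fourier transform is $\pi Xe^{-2\pi X|\xi|}$. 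The first factor is $\int|F|^2W\leqslant\pi X\sum_{k_1,k_2}e^{-2\pi X|u(k_1)-u(k_2)|}\ll X\mathscr{B}_1$.
\item Since $|Xv(l)|\leqslant CX$ and $X\geqslant1$, you have $W(y)^{-1}\ll1+|y-Xv(l)|^2$ for every $l$. The rapid decay of $\kappa$ then gives $\int|K|^2W^{-1}\ll\sum_{l_1,l_2}(1+X|v(l_1)-v(l_2)|)^{-2}\ll\mathscr{B}_2$.
\end{itemize}
Both final steps split the pairs dyadically by the size of the difference and use the window comparison above. This yields $|S|^2\ll X\mathscr{B}_1\mathscr{B}_2$.

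Note that the second factor is the $L^2$ norm of a smoothed sum of point masses. It is not an exponential sum weighted by a function with compactly supported Fourier transform, as your Step~2 assumes. This asymmetry is what makes the double large sieve work.
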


Next, we provide several lemmas regarding Diophantine inequalities, which will play a crucial role in subsequent proofs.

\begin{lemma}\cite[Theorem 2]{RS}\label{lemma:l1-4}
Let $\beta \in \mathbb{R} \setminus \{0,1\}$ be fixed. For $L\geqslant 2$ and $X>0$. Define
\begin{eqnarray*}
\mathscr{B}_3:=\Bigg|\left\{(l_1,l_2,l_3,l_4)\Big|\;l_i\sim L, i=1,\ldots,4, \bigg|\frac{l_1^{\beta}}{L^{\beta}}+\frac{l_2^{\beta}}{L^{\beta}}-\frac{l_3^{\beta}}{L^{\beta}}-\frac{l_4^{\beta}}{L^{\beta}}\bigg|\leqslant \frac{1}{X}\right\}\Bigg|.
\end{eqnarray*}%
Then $$\mathscr{B}_3\ll L^{4+\varepsilon}\left(\frac{1}{L^2}+\frac{1}{X}\right).$$
\end{lemma}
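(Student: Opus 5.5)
Since this is \cite[Theorem 2]{RS}, one could simply cite it. To make it self-contained, I would argue by reducing to the hardest range of $X$ and then doing a difference-variable count. Write $f(l)=l^{\beta}/L^{\beta}$, so $f'(l)\asymp L^{-1}$ and $f''(l)\asymp L^{-2}$ for $l\sim L$; here $\beta\neq 0,1$ is used. The quantity $\mathscr{B}_3$ is non-increasing in $X$.

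\emph{Small $X$.} Suppose $X\leqslant L$. Fix $l_1,l_2,l_3$. Since $f'\asymp L^{-1}$, the admissible $l_4$ lie in an interval of length $\ll L/X+1$. This gives $\mathscr{B}_3\ll L^3(1+L/X)\ll L^4/X$, which is the claim.

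\emph{Large $X$.} Since $\mathscr{B}_3$ decreases in $X$, for $X\geqslant L^2$ it suffices to treat $X=L^2$. It therefore remains to show $\mathscr{B}_3\ll L^{2+\varepsilon}$ whenever $L\leqslant X\leqslant L^2$; this also covers the intermediate range, since $L^2\leqslant L^4/X$ there.

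\emph{Difference variables.} The diagonal solutions $\{l_1,l_2\}=\{l_3,l_4\}$ contribute $\ll L^2$. For the rest, put $h=l_1-l_3$ and $k=l_4-l_2$, and write $g_h(l)=f(l+h)-f(l)$. The condition becomes
$$|g_h(l_3)-g_k(l_2)|\leqslant X^{-1}.$$
Since $g_h(l)\asymp |h|/L$, we need $|h|\asymp|k|$ up to an additive $L/X\leqslant 1$. Split dyadically $|h|,|k|\sim D$ with $1\leqslant D\ll L$; there are $O(\log L)$ ranges. By Taylor expansion,
$$g_h(l)=\beta h\,l^{\beta-1}L^{-\beta}\Bigl(1+\tfrac{(\beta-1)h}{2l}+O(D^2/L^2)\Bigr).$$
So the condition says that $\dfrac{h\,l_3^{\beta-1}\bigl(1+\frac{(\beta-1)h}{2l_3}+\cdots\bigr)}{k\,l_2^{\beta-1}\bigl(1+\cdots\bigr)}$ lies within $O(L/(DX))$ of $1$.

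\emph{Counting in a fixed range.} Fix $h,k$; there are $D^2$ such pairs. The map $l_3\mapsto g_h(l_3)$ has derivative $\asymp D/L^2$. Hence for each $l_2$ the number of $l_3$ is $\ll 1+L^2/(DX)$, giving the trivial count $D^2L(1+L^2/(DX))$. This is fine for $D\ll L^{1/2}$ when $X\asymp L^2$, but too large for bigger $D$.

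\emph{Main obstacle.} The difficulty is the range $L^{1/2}\ll D\ll L$. Here I would exploit the near-rational relation $k/h\approx (l_3/l_2)^{\beta-1}$ together with a second-order constraint. Write $l_3=l_2+j$, and expand the condition in powers of $j/L$ and $h/L$. To main order it becomes a linear relation
$$|k-h-c\,hj/L-c'\,h^2/L+\cdots|\ll L/(DX).$$
The constants $c,c'$ are nonzero because $\beta\neq 0,1$. Then $(h,j)$ determines $k$ up to $O(1)$ choices. The remaining freedom must be controlled by the higher-order terms, which separate solutions on the scale $D^2/L^2$. I expect this leads to a count $\ll L^{1+\varepsilon}\cdot D\cdot L/D$ per dyadic range, using a divisor-type bound on the integer relation that arises after clearing the leading terms. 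Summing over dyadic $D$ then gives $\mathscr{B}_3\ll L^{2+\varepsilon}$.

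This step, making the second-order Taylor terms rigorous enough to convert the approximate equation into a divisor-bounded integer equation, is where I expect the real work to lie. If it resists, the fallback is to follow Robert and Sargos's original argument directly.
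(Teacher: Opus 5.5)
Citing \cite[Theorem 2]{RS}, as your first sentence suggests, is exactly what the paper does; it gives no proof of its own. Your self-contained argument, however, is not a proof, and it has two concrete defects. The small-$X$ bound and the count for $D\ll L^{1/2}$ are fine.

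\textbf{The reduction for $L\leqslant X\leqslant L^2$ is wrong.} The target ``$\mathscr{B}_3\ll L^{2+\varepsilon}$ whenever $L\leqslant X\leqslant L^2$'' is false. Cover the $O(1)$-length range of $a=f(l_1)+f(l_2)$ by $O(X)$ intervals of length $1/X$; Cauchy--Schwarz then gives $\mathscr{B}_3\gg L^4/X$, for example $\mathscr{B}_3\gg L^3$ at $X=L$. Monotonicity in $X$ only transfers the case $X=L^2$ to $X\geqslant L^2$. The correct reduction uses the difference structure $a-b$. Write $\mathscr{B}_3(X)$ for the count with tolerance $1/X$, and let $r_j$ be the number of pairs $(l_1,l_2)$ with $a$ in the $j$-th interval of length $L^{-2}$. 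Then for $X\leqslant L^2$,
\[
\mathscr{B}_3(X)\leqslant\sum_{|s|\leqslant L^2/X+1}\ \sum_j r_jr_{j+s}\ll\frac{L^2}{X}\sum_j r_j^2\leqslant\frac{L^2}{X}\,\mathscr{B}_3(L^2),
\]
so everything reduces to the single case $X=L^2$.

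\textbf{The case $X=L^2$ is not proved.} For $L^{1/2}\ll D\ll L$ your trivial count is $D^2L$. You must save a factor $D^2/L$, a full power of $L$ when $D\asymp L$, and that saving is the whole content of the Robert--Sargos theorem. Your sketch does not provide it, for two reasons.
\begin{itemize}
\item Expanding in powers of $j/L$ is not a small-parameter expansion, since $j=l_3-l_2$ ranges up to size $L$; the ratio $k/h\approx(l_3/l_2)^{\beta-1}$ can lie anywhere in a fixed interval, and your ``constants'' $c,c'$ actually depend on $l_2$.
\item There is no integer relation to which a divisor bound can be applied. Only for $\beta=2$ is the condition an exact equation, $h(2l_3+h)=k(2l_2+k)+\nu$ with $\nu\in\{0,\pm1\}$, and there your idea works. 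Already for $\beta=3$ it reads $|h(3l_3^2+3hl_3+h^2)-k(3l_2^2+3kl_2+k^2)|\leqslant L$, an inequality between integers of size $DL^2$. For non-integer $\beta$ it is a real inequality at precision $L^{-2}$.
\end{itemize}
Nothing in your sketch uses more than $f'\asymp L^{-1}$, $f''\asymp L^{-2}$ and local Taylor expansion, and you give no mechanism by which that information would produce the required saving. So invoking \cite[Theorem 2]{RS} is not a fallback but the actual proof, exactly as in the paper.
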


\begin{lemma}\label{lemma:l1-2}
    Let $\beta\in \mathbb{R}\setminus \{0\}$ be fixed. For $L, X\geqslant1$, define 
    $$
    \mathscr{B}_4:=\Bigg|\left\{(l_1,l_2)\Big|\;l_1,l_2\sim L,\Big|\frac{L^\beta}{l_1^\beta}-\frac{L^\beta}{l_2^\beta}\Big|\leqslant \frac{1}{X}\right\}\Bigg|.
    $$
    Then we have
    $$
    \mathscr{B}_4\ll L^2\left(\frac{1}{L}+\frac{1}{X}\right).
    $$
\end{lemma}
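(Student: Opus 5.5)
The statement to prove is Lemma \ref{lemma:l1-2}: for $l_1,l_2\sim L$, the number of pairs with $|L^\beta l_1^{-\beta}-L^\beta l_2^{-\beta}|\leqslant 1/X$ is $\ll L^2(1/L+1/X)$. The plan is to treat this as a one-dimensional spacing problem for the function $g(t)=L^\beta t^{-\beta}$ on $[L,2L]$: fix $l_1$ and count the admissible $l_2$.

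First, note that $g$ is strictly monotone on $[L,2L]$, since $\beta\neq0$. By the mean value theorem,
$$
g(l_1)-g(l_2)=g'(\xi)(l_1-l_2)=-\beta L^\beta\xi^{-\beta-1}(l_1-l_2)
$$
for some $\xi$ between $l_1$ and $l_2$. Because $\xi\in[L,2L]$, we have $|g'(\xi)|\asymp_\beta L^{-1}$, uniformly with constants depending only on $\beta$. Hence there is a constant $c=c(\beta)>0$ such that
$$
|g(l_1)-g(l_2)|\geqslant c\,|l_1-l_2|/L .
$$

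Second, the inequality $|g(l_1)-g(l_2)|\leqslant 1/X$ therefore forces $|l_1-l_2|\leqslant L/(cX)$. For each fixed $l_1\sim L$, the number of integers $l_2$ in an interval of length $2L/(cX)$ is at most $1+2L/(cX)$. Summing over the $\ll L$ choices of $l_1$ gives
$$
\mathscr{B}_4\ll L\left(1+\frac{L}{X}\right)=L^2\left(\frac1L+\frac1X\right),
$$
which is the claim.

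There is no real obstacle here. The only point needing care is that the lower bound for $|g'|$ is uniform on the dyadic range, which holds because $\xi^{-\beta-1}\asymp L^{-\beta-1}$ for $\xi\in[L,2L]$ regardless of the sign of $\beta$. The diagonal term $l_1=l_2$ is what produces the contribution $L$, that is, the $1/L$ term inside the parentheses. The hypothesis $X\geqslant1$ is not needed for this argument beyond the shape of the final bound.
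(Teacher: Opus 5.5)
Your proposal is correct and follows essentially the same route as the paper. Both arguments use the mean value theorem to get $|g(l_1)-g(l_2)|\asymp_\beta |l_1-l_2|/L$ uniformly on $(L,2L]$. This leaves $O(1+L/X)$ admissible $l_2$ for each $l_1$, and summing over $l_1$ gives $L(1+L/X)$.
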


\begin{proof}
    Define
    $$
    f(t)=\left(\frac{L}{t}\right)^{\beta}.
    $$
    By the Mean Value Theorem, for any $l_1, l_2 \in (L, 2L]$, there exists $\xi$ between $l_1$ and $l_2$ such that $$\left|f(l_1) - f(l_2)\right| = \left|f^{'}(\xi)\right| \left|l_1 - l_2\right|.$$
    A simple calculation shows that 
    $$
    \left|f^{'}(\xi)\right|\asymp L^{-1}
    $$
    for $\xi\in(L,2L]$.
    Specifically, we have
    $$
      C_1\frac{|l_1-l_2|}{L}\leqslant \left|\left(\frac{L}{l_1}\right)^{\beta}-\left(\frac{L}{l_2}\right)^{\beta}\right|\leqslant C_2 \frac{|l_1-l_2|}{L}.
    $$
    Moreover, as $|f(l_1)-f(l_2)|\leqslant \frac{1}{X}$, we obtain
    $$
    C_1\frac{|l_1-l_2|}{L}\leqslant \frac{1}{X}\quad \Rightarrow \quad \left|l_1-l_2\right|\leqslant \frac{L}{C_1X}.
    $$
    For a fixed $l_1$, any qualifying $l_2$ must fall within the interval $[l_1 - \frac{L}{C_1 X}, l_1 + \frac{L}{C_1 X}]$. The length of this interval is $\frac{2L}{C_1 X}$, and the number of integers contained in it is $O\left(1 + \frac{L}{X}\right)$.
    Thus, we have
    $$
    \mathscr{B}_4\ll L\left(1+\frac{L}{X}\right).
    $$
\end{proof}

\begin{lemma}\label{lemma：mn}
    Let $\beta\in \mathbb{R}\setminus \{0\}$ and $\delta\in \mathbb{R}$ be constants satisfying $|\delta| \leqslant MN$. For $M, N\geqslant 1$ and $X>1$, we define
    $$
    \mathscr{B}_5:=\Bigg|\left\{(m_1,m_2,n_1,n_2)\Big|m_1,m_2\sim M, n_1,n_2\sim N, \bigg|\frac{(m_1n_1+\delta)^{\beta}}{M^\beta N^\beta}-\frac{(m_2n_2+\delta)^{\beta}}{M^\beta N^\beta}\bigg|\leqslant\frac{1}{X}\right\}\Bigg|.
    $$
    Then
    $$
    \mathscr{B}_5\ll (MN)^{2+\varepsilon}\left(\frac{1}{MN}+\frac{1}{X}\right).
    $$
\end{lemma}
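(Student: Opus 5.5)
The plan is to reduce $\mathscr{B}_5$ to a counting problem for the products $k=mn$ and then use the divisor bound. Write $k_i=m_in_i$, so that $k_i\in(MN,4MN]$. Each integer $k$ arises from at most $\tau(k)\ll (MN)^{\varepsilon}$ pairs $(m,n)$ with $mn=k$. Consequently
$$
\mathscr{B}_5\ll (MN)^{2\varepsilon}\,\Bigg|\left\{(k_1,k_2)\Big|\;MN<k_1,k_2\leqslant 4MN,\ \bigg|\frac{(k_1+\delta)^{\beta}}{(MN)^{\beta}}-\frac{(k_2+\delta)^{\beta}}{(MN)^{\beta}}\bigg|\leqslant\frac1X\right\}\Bigg|.
$$
It therefore suffices to show that this two-variable count is $\ll MN\left(1+\frac{MN}{X}\right)$, which is the analogue of Lemma \ref{lemma:l1-2}.

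For the two-variable count I would follow the proof of Lemma \ref{lemma:l1-2}. Put $g(t)=\left(\frac{t+\delta}{MN}\right)^{\beta}$. By the mean value theorem,
$$
|g(k_1)-g(k_2)|=|g'(\xi)|\,|k_1-k_2|, \qquad |g'(\xi)|=\frac{|\beta|}{MN}\left(\frac{\xi+\delta}{MN}\right)^{\beta-1}.
$$
If $\xi+\delta\asymp MN$, this gives $|g'(\xi)|\asymp (MN)^{-1}$. The condition then forces $|k_1-k_2|\ll MN/X$, so for each $k_1$ there are $O(1+MN/X)$ admissible $k_2$. Summing over the $O(MN)$ values of $k_1$ gives the claimed bound.

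The main obstacle is the perturbation. The hypothesis $|\delta|\leqslant MN$ does not by itself keep $k+\delta$ comparable to $MN$. For instance $\delta$ could be close to $-MN$, so that $k+\delta$ is small or even near zero for $k$ near $MN$. (For non-integral $\beta$ one presumably also needs $k+\delta>0$; I would treat that as implicit, as is done in the source.)

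I would handle this case by a dyadic decomposition of $k+\delta$. Split according to $k+\delta\sim U$ with $U$ dyadic, $1\ll U\leqslant 5MN$, and treat the finitely many $k$ with $|k+\delta|<1$ directly. On a block with $k_i+\delta\sim U$, the derivative satisfies $|g'|\asymp (MN)^{-1}(U/MN)^{\beta-1}$, so $|k_1-k_2|\ll (MN/X)(MN/U)^{\beta-1}$. The block contributes $\ll U\left(1+\frac{MN}{X}\left(\frac{MN}{U}\right)^{\beta-1}\right)$ pairs. For $\beta\leqslant 1$ this is at most $U\left(1+\frac{MN}{X}\right)$, which sums over $U$ to $MN(1+MN/X)$.

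When $\beta>1$ and $U$ is small, I would instead use the monotonicity of $g$ together with the trivial bound $U^2$ for the number of pairs in the block. Pairs whose $k_1,k_2$ lie in different dyadic blocks that are far apart satisfy $|g(k_1)-g(k_2)|\gg (U/MN)^{\beta}$, and such a difference only matters when it is at most $1/X$. Nonadjacent blocks are therefore negligible, and adjacent blocks are handled like a single block.

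If this bookkeeping gets messy, the fallback is to note that in the applications $\delta$ is a fixed constant, so that $k+\delta\asymp MN$ holds automatically, and to run the clean argument of the second paragraph. Either way, combining the two-variable count with the divisor-bound reduction yields $\mathscr{B}_5\ll (MN)^{2+\varepsilon}\left(\frac{1}{MN}+\frac1X\right)$.
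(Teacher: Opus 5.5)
Your core argument is the paper's proof. You pass to $k=mn$ using $\tau(k)\ll (MN)^{\varepsilon}$ and then run the mean-value argument of Lemma \ref{lemma:l1-2}. Your ``fallback'' is in effect what the paper does: it writes $h_i=m_in_i+\delta$, declares $h_i\sim H$ with $H\sim MN$, and applies Lemma \ref{lemma:l1-2}, so it silently assumes $mn+\delta\asymp MN$.

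Where your proposal goes wrong is the claim that a dyadic decomposition in $k+\delta$ handles arbitrary $|\delta|\leqslant MN$ when $\beta>1$. Take $\delta=-MN$; then $k+\delta>0$ is automatic, since $k>MN$, so that worry can be dropped. Set $U_0=MNX^{-1/\beta}$. Every $k$ with $k+\delta\leqslant U_0$ has $0<g(k)\leqslant 1/X$, so \emph{every} pair with both $k_i+\delta\leqslant U_0$ is admissible, however far apart their blocks are. Your remark that nonadjacent blocks are negligible is therefore backwards: a cross-block difference that is at most $1/X$ is exactly one that counts. These pairs alone contribute $\asymp U_0^2=(MN)^2X^{-2/\beta}$ to your two-variable count. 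That is not $\ll MN(1+MN/X)$ once $\beta>2$ (take $X=MN$). For $1<\beta\leqslant 2$ one has $U_0^2\leqslant (MN)^2/X$, and a corrected version of your dyadic argument does go through.

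This is not merely a loss in the reduction to $k$: the lemma itself is false in that regime. Take $M=N$, $X=MN$, $\delta=-MN$ and $\beta>4$. Every quadruple $m_i=M+a_i$, $n_i=N+b_i$ with $1\leqslant a_i,b_i\leqslant U_0/(3M)$ has $0<m_in_i+\delta<U_0$, and hence satisfies the defining inequality. So $\mathscr{B}_5\gg (U_0/M)^4=M^{4-8/\beta}$, which exceeds $(MN)^{2+\varepsilon}\bigl(\frac{1}{MN}+\frac{1}{X}\bigr)\asymp M^{2+2\varepsilon}$ for small $\varepsilon$.

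The statement therefore genuinely requires one of two things:
\begin{itemize}
\item $mn+\delta\gg MN$, e.g.\ $\delta$ a fixed constant, or $\delta\geqslant-(1-c)MN$ for a fixed $c>0$;
\item or $\beta\leqslant 1$. Then $|g'(\xi)|\geqslant|\beta|5^{\beta-1}(MN)^{-1}$ throughout $0<\xi+\delta\leqslant 5MN$, and your second-paragraph argument works verbatim for every $|\delta|\leqslant MN$.
\end{itemize}
The paper only uses the lemma with $\beta=-1$ and $\delta\in\{0,1\}$. So drop the ``either way'' and finish with the clean argument under the hypothesis $k+\delta\asymp MN$.
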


\begin{proof}
We partition the elements of the set according to the products $h_i = m_i n_i + \delta$. Let $\mathcal{H}$ denote the set of possible values for $h_i$, so that $|\mathcal{H}| =H\sim  MN$. For fixed $h_1$ and $h_2$, the number of pairs $(m_i, n_i)$ satisfying $m_i n_i = h_i - \delta$ is given by the divisor function $\tau(h_i - \delta)$.
Using the well-known bound $\tau(n) \ll n^\varepsilon$, we have
    \begin{eqnarray*}
        \mathscr{B}_5
        &=&\sum_{\substack{h_1,h_2\sim H\\
        \left|\frac{h_1^{\beta}}{M^\beta N^\beta}-\frac{h_2^\beta}{M^\beta N^\beta}\right|\leqslant\frac{1}{X}}}\left(\sum_{m_1n_1=h_1-\delta}1\right)\left(\sum_{m_2n_2=h_2-\delta}1\right)\\
        &\ll&(MN)^{2\varepsilon }\sum_{\substack{h_1,h_2\sim H\\
        \left|\frac{h_1^{\beta}}{M^\beta N^\beta}-\frac{h_2^\beta}{M^\beta N^\beta}\right|\leqslant\frac{1}{X}}}1\\
        &\ll&(MN)^{2+\varepsilon }\left(\frac{1}{MN}+\frac{1}{X}\right).
    \end{eqnarray*}
    In the last inequality, we have used Lemma \ref{lemma:l1-2}.
\end{proof}

\subsection{Basic lemmas on exponential sums}

\leavevmode \\

In this subsection, we present several lemmas for estimating exponential sums, which will be instrumental in proving Theorem \ref{S_1^{dag}(H,M,N)}. We begin with the Kusmin-Landau inequality.

\begin{lemma}\cite[Theorem 2.1]{GK}\label{Kusmin-Landau}
   If $f$ is continuously differentiable, $f^{'}$ is monotonic, and $\parallel f^{'}\parallel\geqslant \lambda>0$ on $I$ then
   $$\sum_{n\in I}e\left(f(n)\right)\ll \lambda^{-1}.$$
\end{lemma}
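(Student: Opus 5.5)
The plan is the classical telescoping (Abel summation) argument, which uses only that $f'$ is continuous and monotone. Let $I=[a,b]$ and write the sum as $\sum_{a\leqslant n\leqslant b}e(f(n))$, with integers $n$ running from $N_1$ to $N_2$.

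\textbf{Step 1: normalise $f'$.} Since $f'$ is continuous on the interval $I$ and $\|f'\|\geqslant\lambda$, the image $f'(I)$ is an interval containing no point within distance $\lambda$ of an integer. Hence $f'(I)\subset[k+\lambda,k+1-\lambda]$ for some integer $k$. For integer $n$ we have $e(f(n)-kn)=e(f(n))$, so we may replace $f$ by $f(t)-kt$. We may therefore assume
$$\lambda\leqslant f'(t)\leqslant 1-\lambda \qquad (t\in I).$$
Replacing $f$ by $-f$ if necessary (which conjugates the sum), we may also assume $f'$ is nondecreasing. We may take $\lambda\leqslant 1/2$, since otherwise the hypothesis is vacuous.

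\textbf{Step 2: telescoping.} Set $g(n)=f(n+1)-f(n)$ for $N_1\leqslant n<N_2$. By the mean value theorem, $g(n)=f'(\xi_n)$ with $\xi_n\in(n,n+1)$. Thus $g(n)\in[\lambda,1-\lambda]$, and $g$ is nondecreasing in $n$ because $f'$ is monotone. Since $e(g(n))\neq 1$, we can write
$$e(f(n))=c_n\bigl(e(f(n+1))-e(f(n))\bigr),\qquad c_n=\frac{1}{e(g(n))-1}=-\frac12-\frac{i}{2}\cot\bigl(\pi g(n)\bigr).$$
Summing over $N_1\leqslant n<N_2$ and applying partial summation gives
$$\sum_{N_1\leqslant n<N_2}e(f(n)) = c_{N_2-1}e(f(N_2))-c_{N_1}e(f(N_1))+\sum_{N_1<n<N_2}(c_{n-1}-c_n)\,e(f(n)).$$
The single omitted term $e(f(N_2))$ contributes $O(1)$, which is $\ll\lambda^{-1}$.

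\textbf{Step 3: bound the terms.} From $\lambda\leqslant g(n)\leqslant1-\lambda$ we get $|c_n|\leqslant (2\sin\pi\lambda)^{-1}\ll\lambda^{-1}$, so the boundary terms are $\ll\lambda^{-1}$. The remaining sum is bounded by $\sum_n|c_{n-1}-c_n|$. Because $\cot$ is monotone on $(0,\pi)$ and $g(n)$ is monotone, the imaginary parts $-\tfrac12\cot(\pi g(n))$ form a monotone sequence. Hence this variation telescopes to at most
$$\tfrac12\bigl|\cot(\pi g(N_2-1))-\cot(\pi g(N_1))\bigr|\leqslant\cot(\pi\lambda)\ll\lambda^{-1}.$$
Combining the three contributions gives the bound $\ll\lambda^{-1}$.

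\textbf{Main obstacle.} The only step requiring care is Step 2. One must check that the monotonicity of $f'$ passes to the sequence $g(n)$, and hence to $c_n$, so that the variation sum telescopes instead of being bounded term by term. A term-by-term bound would cost a factor equal to the length of $I$, which is exactly what the lemma avoids.
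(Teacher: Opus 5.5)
Your argument is correct and is the standard proof. The paper gives no proof of its own and simply quotes Graham--Kolesnik, Theorem 2.1, where the result is proved exactly as you do: reduce to $\lambda\leqslant f'\leqslant 1-\lambda$, write $e(f(n))=\bigl(e(f(n+1))-e(f(n))\bigr)/\bigl(e(g(n))-1\bigr)$ with $g(n)=f(n+1)-f(n)$, and after partial summation telescope the monotone sequence $\cot(\pi g(n))$. One cosmetic point: the sign flip in Step 1 is unnecessary, and if done after the shift it would undo the normalisation; the telescoping works either way, because $\cot(\pi g(n))$ is monotone whichever direction $f'$ is monotone in.
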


The second lemma provides a connection between the maximum of partial sums and an integral of exponential sums, often used in the study of maximal inequalities.

\begin{lemma}\cite[Lemma 2]{RS}\label{Z*}
   Let $z_1,\ldots, z_N$ be complex numbers. We set
  $$\left|\sum_{1\leqslant n\leqslant N}z_n\right|^{*}=\max_{1\leqslant N_1\leqslant N_2\leqslant N}\left|\sum_{n=N_1}^{N_1}z_n\right|,
  $$
  letting $k\geqslant1$ be a real number. We have
  $$\left(\left|\sum_{1\leqslant n\leqslant N}z_n\right|^{*}\right)^{k}\leqslant \left(1+\log N\right)^{k-1}\int_{-\frac{1}{2}}^{\frac{1}{2}}\left|\sum_{n=1}^{N}z_ne(nt)\right|^{k}\mathscr{L}(t)\mathrm{d}t,
  $$
  with $\mathscr{L}=\min\left\{N,\frac{1}{2|t|}\right\}$ 
  and $\int_{-\frac{1}{2}}^{\frac{1}{2}}\mathscr{L}(t)\mathrm{d}t=1+\log N.$
\end{lemma}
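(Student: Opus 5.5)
The plan is to write each partial sum as an integral of the full trigonometric polynomial against a Dirichlet-type kernel, and then apply H\"older's inequality with respect to the weight $\mathscr{L}(t)$. Throughout I read the inner sum in the statement as $\sum_{n=N_1}^{N_2}z_n$; the upper limit $N_1$ there is a typo.

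Put $S(t)=\sum_{n=1}^{N}z_ne(nt)$. Fix $1\leqslant N_1\leqslant N_2\leqslant N$ and set $D(t)=\sum_{n=N_1}^{N_2}e(-nt)$. By orthogonality, $\int_{-1/2}^{1/2}e((n-n')t)\,\mathrm{d}t=\mathbf{1}_{n=n'}$, so
$$\sum_{n=N_1}^{N_2}z_n=\int_{-\frac12}^{\frac12}S(t)D(t)\,\mathrm{d}t .$$

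Next I bound the kernel by $|D(t)|\leqslant \mathscr{L}(t)$.
\begin{itemize}
\item Trivially $|D(t)|\leqslant N_2-N_1+1\leqslant N$.
\item Summing the geometric series gives $|D(t)|\leqslant 1/|\sin \pi t|$ for $0<|t|\leqslant 1/2$.
\item Since $\sin(\pi u)\geqslant 2u$ for $0\leqslant u\leqslant 1/2$ (concavity of $\sin$), this yields $|D(t)|\leqslant 1/(2|t|)$.
\end{itemize}
Hence
$$\Big|\sum_{n=N_1}^{N_2}z_n\Big|\leqslant\int_{-\frac12}^{\frac12}|S(t)|\,\mathscr{L}(t)\,\mathrm{d}t .$$

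For $k>1$ I apply H\"older with exponents $k/(k-1)$ and $k$ to the splitting $\mathscr{L}=\mathscr{L}^{(k-1)/k}\mathscr{L}^{1/k}$:
$$\int|S|\mathscr{L}\leqslant\Big(\int\mathscr{L}\Big)^{\frac{k-1}{k}}\Big(\int|S|^k\mathscr{L}\Big)^{\frac1k}.$$
For $k=1$ this step is trivial. Raising to the $k$-th power and taking the maximum over $N_1\leqslant N_2$ gives the claimed inequality, because the right-hand side does not depend on $N_1,N_2$.

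Finally I compute $\int\mathscr{L}$. The two expressions in the minimum agree at $|t|=1/(2N)$, so
$$\int_{-\frac12}^{\frac12}\mathscr{L}(t)\,\mathrm{d}t=2\left(\int_0^{\frac{1}{2N}}N\,\mathrm{d}t+\int_{\frac1{2N}}^{\frac12}\frac{\mathrm{d}t}{2t}\right)=1+\log N .$$

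The only point requiring any care is the kernel bound $|D(t)|\leqslant\mathscr{L}(t)$, specifically the elementary inequality $|\sin\pi t|\geqslant 2|t|$ on $[-1/2,1/2]$; everything else is routine.
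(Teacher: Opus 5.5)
Your proof is correct: the Dirichlet-kernel representation $\sum_{n=N_1}^{N_2}z_n=\int_{-1/2}^{1/2}S(t)D(t)\,\mathrm{d}t$, the bound $|D(t)|\leqslant\min\{N,1/(2|t|)\}=\mathscr{L}(t)$, H\"older with weight $\mathscr{L}$, and the evaluation $\int_{-1/2}^{1/2}\mathscr{L}(t)\,\mathrm{d}t=1+\log N$ all check out. The paper gives no proof of its own and simply cites Robert--Sargos; as far as I recall, their argument is this same kernel-plus-H\"older one, so your proposal follows the standard proof.
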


After the application of the standard $B$-process, the endpoints of the summation interval $[a, b]$ for the exponential sum depend on the parameters $x$ and $M_1$, which may complicate certain composite estimates. The following lemma is designed to bypass this dependency.

\begin{lemma}\cite[Lemma 4]{RS}\label{dependence}
   There exist two integers $L$ and $L_1$ which do not depend on $ x\in[X,2X]$ nor $M_1\in \{M+1,\ldots, 2M\}$ and a real number $\rho=\rho(u),$ with
   $$
   L\asymp L_1 \asymp\frac{X}{M},\quad 1\leqslant L\leqslant L_1\ll L, \quad \rho \asymp X
   $$
   such that we have
   $$
   \sum_{m=M+1}^{M_1}e\left(x\left(\frac{m}{M}\right)^{\alpha}\right)\ll \frac{M}{X^{\frac{1}{2}}}\left|\sum_{L\leqslant l\leqslant L_1}e\left(\rho(x)\left(\frac{l}{L}\right)^{\overline{\alpha}}\right)\right|^{*} +M^{\frac{1}{2}}
   $$
   with $\overline{\alpha}=\frac{\alpha}{\alpha-1}$. We may take $L=\left[2^{-|\alpha|-1}|\alpha|XM^{-1}\right]$ 
   and $L_1=\left[2^{|\alpha|+2}|\alpha|XM^{-1}\right]$
   and $\rho(x)=(1-\alpha)\left(\frac{LM}{|\alpha|}\right)^{\overline{\alpha}}x^{\frac{1}{1-\alpha}}.$
\end{lemma}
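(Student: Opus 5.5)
The statement is the endpoint-free form of van der Corput's $B$-process, so the plan is to apply a standard truncated Poisson/stationary-phase lemma to
$$f(m)=x\left(\frac{m}{M}\right)^{\alpha},\qquad m\in[M+1,M_1],$$
and then enlarge the dual range to a fixed interval. For the stationary-phase lemma we would take, for instance, \cite[Lemma 3.6]{GK}. It requires $f''\asymp\Lambda$, with $f'''$ and $f^{(4)}$ of the expected sizes, and it gives
$$\sum_{a\leqslant m\leqslant b}e(f(m))=e\left(-\tfrac18\right)\sum_{f'(b)\leqslant l\leqslant f'(a)}\frac{e(g(l))}{\sqrt{|f''(m_l)|}}+O\bigl(\Lambda^{-1/2}+\log(2+|f'(b)-f'(a)|)\bigr).$$
(The order of the endpoints is reversed according to the monotonicity of $f'$.) Here $m_l$ is the stationary point $f'(m_l)=l$ and $g(l)=f(m_l)-lm_l$.

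\textbf{Derivative sizes and error terms.} For $x\in[X,2X]$ and $m\sim M$ we have
$$f'(m)=\alpha x m^{\alpha-1}M^{-\alpha},\qquad |f'(m)|\asymp X/M,\qquad |f''(m)|\asymp X/M^2,$$
and the higher derivatives scale correspondingly. Hence the amplitude $|f''(m_l)|^{-1/2}$ is of size $M/X^{1/2}$. The error term is $O(M X^{-1/2}+\log X)$. The statement is only meaningful when $L\geqslant1$, that is $X\gg M$, and in that range $MX^{-1/2}\ll M^{1/2}$ and $\log X\ll M^{1/2}$ (up to the trivial case). This gives the additive term $M^{1/2}$.

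\textbf{Computing the phase.} Solving $f'(m_l)=l$ gives $m_l=M\bigl(lM/(\alpha x)\bigr)^{1/(\alpha-1)}$. Substituting, $g(l)=(1-\alpha)f(m_l)$, which simplifies to
$$g(l)=(1-\alpha)\left(\frac{M}{|\alpha|}\right)^{\overline{\alpha}}x^{\frac{1}{1-\alpha}}\,l^{\overline{\alpha}}=\rho(x)\left(\frac{l}{L}\right)^{\overline{\alpha}}.$$
This uses the identity $\alpha/(\alpha-1)=\overline{\alpha}$ and absorbs $L^{\overline{\alpha}}$ into $\rho$. With $L\asymp X/M$ one checks $\rho\asymp X$. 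The sign conventions (replacing $\alpha$ by $|\alpha|$, and complex-conjugating the sum when $\alpha<0$ makes $l$ negative) are routine.

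\textbf{Removing the weight.} Next we remove the smooth weight $|f''(m_l)|^{-1/2}$, which is monotone in $l$ and of size $M/X^{1/2}$. Abel summation bounds the weighted sum by $\ll (M/X^{1/2})\max_{J}\bigl|\sum_{l\in J}e(g(l))\bigr|$, where $J$ runs over subintervals of the dual range.

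\textbf{Making the range independent of $x$ and $M_1$.} The dual range is the interval between $f'(M_1)$ and $f'(M+1)$ (up to sign). Since $m/M\in[1,2]$, it lies inside
$$\bigl[\,|\alpha|x M^{-1}2^{-|\alpha-1|},\ |\alpha|xM^{-1}2^{|\alpha-1|}\,\bigr]\subset\bigl[2^{-|\alpha|-1}|\alpha|XM^{-1},\ 2^{|\alpha|+2}|\alpha|XM^{-1}\bigr]$$
for every $x\in[X,2X]$. With the stated integer choices of $L$ and $L_1$ this is contained in $[L,L_1]$, which depends on neither $x$ nor $M_1$. So every sum over a subinterval $J$ is bounded by the maximal partial sum $\bigl|\sum_{L\leqslant l\leqslant L_1}e(\rho(x)(l/L)^{\overline{\alpha}})\bigr|^{*}$, which proves the lemma.

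\textbf{Main obstacle.} The delicate step is the uniformity of the $B$-process error term. It must be $O(M^{1/2})$ with an implied constant independent of $x$ and $M_1$, including the case where the interval $[M+1,M_1]$ is short. For short intervals I would first try the version of the truncated Poisson formula whose error is $O(\Lambda^{-1/2}+\log)$, valid for any subinterval. If that fails, for intervals of length $\ll M/X^{1/2}$ I would bound the sum trivially, which is again $\ll M/X^{1/2}\leqslant M^{1/2}$.
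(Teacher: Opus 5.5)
Your route is the standard proof of this lemma. The paper gives no proof of its own and simply quotes Robert and Sargos. The steps are: the $B$-process on $f(m)=x(m/M)^{\alpha}$, partial summation to remove the monotone weight $|f''(m_l)|^{-1/2}\asymp MX^{-1/2}$, and enlarging the dual range to the fixed interval $[L,L_1]$.

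Your computations are correct:
\begin{itemize}
\item $m_l=M\bigl(lM/(\alpha x)\bigr)^{1/(\alpha-1)}$ and $g(l)=(1-\alpha)f(m_l)=\rho(x)(l/L)^{\overline{\alpha}}$. For $\alpha<0$ the substitution $l\mapsto -l$ is enough; no conjugation is needed.
\item $|\rho|\asymp X$.
\item The inclusion of the dual range in $[L,L_1]$ reduces to $|\alpha-1|\leqslant|\alpha|+1$.
\end{itemize}
Your first option also settles the short-interval worry. Take $|f''|\asymp X/M^{2}$ and only upper bounds $|f^{(r)}|\ll X/M^{r}$ for $r=3,4$, measured at scale $M$ rather than $b-a$. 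Then the $B$-process error $O\bigl(MX^{-1/2}+\log(2+X/M)\bigr)$ holds uniformly for every $[M+1,M_1]$.

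The step that is not justified is absorbing the logarithm into $M^{1/2}$.
\begin{itemize}
\item $X\gg M$ does not give $\log(2+X/M)\ll M^{1/2}$; take $X=e^{M}$.
\item No ``trivial case'' rescues this, because the trivial bound for the left-hand side is $M$, not $M^{1/2}$.
\item In the range where the lemma is actually used, $M\ll X\ll M^{2}$ (case (3) of the proof of Theorem~\ref{S_1^{dag}(H,M,N)}), one has $\log X\ll\log M$, and your argument is complete there.
\end{itemize}
So you should either state that restriction or treat $X\geqslant M^{2}$ separately.

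One way to handle $X\geqslant M^{2}$ is to run the $B$-process backwards on the dual sum.
\begin{itemize}
\item The dual phase $g$ has $|g''|\asymp M^{2}/X\leqslant 1$, and $g'(l)=-m_l$.
\item For $M'\leqslant 2M$, the $B$-process turns $\sum_{l\in f'([M+1,M'])}e(g(l))$ back into $e(\pm\tfrac18)\sum_{M<m\leqslant M'}|f''(m)|^{1/2}e(f(m))$, with error $O(X^{1/2}/M+\log M)$. Here $f'([M+1,M'])\subset[L,L_1]$.
\item So each such weighted partial sum is at most $|\cdot|^{*}+O(X^{1/2}/M+\log M)$.
\item Partial summation in $m$ with the monotone factor $|f''(m)|^{-1/2}\asymp M/X^{1/2}$ then gives
$\sum_{m=M+1}^{M_1}e(f(m))\ll MX^{-1/2}|\cdot|^{*}+1+MX^{-1/2}\log M$.
\end{itemize}
This is within the claimed bound, so the lemma holds for all $X\gg M$.
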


\section{Proof of Theorems \ref{S_1(H,M,N)} and \ref{S_1^{dag}(H,M,N)}}

\subsection{Proof of Theorem \ref{S_1(H,M,N)}}

\leavevmode \\

We first consider the case where $X > MN$. By applying Cauchy’s inequality, we obtain
\begin{equation*}
\begin{split}
 \left|S(H,M,N)\right|^2
    &\leqslant MN\sum_{m\sim M}\sum_{n\sim N}\left|\sum_{h\sim H}b(h)e\left(X\frac{(mn+\delta)^\beta h^\alpha}{M^\beta N^\beta H^\alpha}\right)\right|^2\\
    &\leqslant MN\sum_{m\sim M}\sum_{n\sim N}\sum_{h_1\sim H}\sum_{h_2\sim H}b(h_1)\overline{b(h_2)}e\left(X\frac{(mn+\delta)^\beta}{M^\beta N^\beta}\cdot\frac{h_1^{\alpha}-h_2^{\alpha}}{H^\alpha}\right)\\
    &\leqslant MN\sum_{m\sim M}\sum_{n\sim N}\sum_{h_1\sim H}\sum_{h_2\sim H}b(h_1)\overline{b(h_2)}e\bigg(Xu(m,n)v(h_1,h_2)\bigg)
\end{split}
\end{equation*}
where
$$
u(m,n)=\frac{(mn+\delta)^\beta}{M^\beta N^\beta}, \quad v(h_1, h_2)=\frac{h_1^\alpha-h_2^\alpha}{H^\alpha}.
$$
By Lemmas \ref{double large sieve inequality}, \ref{lemma:l1-4}, and \ref{lemma：mn}, we have
$$
 \left|S(H,M,N)\right|^2\ll MNX^{\frac{1}{2}}\left\{H^{4+\varepsilon}\bigg(\frac{1}{H^2}+\frac{1}{X}\bigg)\right\}^{\frac{1}{2}}\left\{(MN)^{2+\varepsilon}\left(\frac{1}{MN}+\frac{1}{X}\right)\right\}^{\frac{1}{2}}.
$$
Thus, 
\begin{equation}\label{S11}
\begin{split}
 S(H,M,N)
    &\ll(HMN)^{1+\varepsilon}\left\{\left(\frac{X}{MNH^2}\right)^{\frac{1}{4}}+\frac{1}{(MN)^{\frac{1}{4}}}+\frac{1}{H^{\frac{1}{2}}}+\frac{1}{X^{\frac{1}{4}}}\right\}\\
    &\ll(HMN)^{1+\varepsilon}\left\{\left(\frac{X}{MNH^2}\right)^{\frac{1}{4}}+\frac{1}{(MN)^{\frac{1}{4}}}+\frac{1}{H^{\frac{1}{2}}}\right\}.
\end{split}
\end{equation}

Next, we we consider the case $X \leqslant MN$. In this regime, we express the sum as
$$
S(H,M,N)=\sum_{m\sim M}\sum_{n\sim N}a(m,n)\sum_{h\sim H}b(h)e\bigg(Xu(m,n)v(h)\bigg),
$$
where
$$
u(m,n)=\frac{(mn+\delta)^\beta}{M^\beta N^\beta}, \quad v(h)=\frac{h^\alpha}{H^\alpha}.
$$
A direct application of Lemmas \ref{double large sieve inequality}, \ref{lemma:l1-2}, and \ref{lemma：mn} yields
\begin{equation}\label{S12}
\begin{split}
  S(H,M,N)
    &\ll X^{\frac{1}{2}}\left\{H^2\left(\frac{1}{H}+\frac{1}{X}\right)\right\}^{\frac{1}{2}}\left\{(MN)^{2+\varepsilon}\left(\frac{1}{MN}+\frac{1}{X}\right)\right\}^{\frac{1}{2}}\\
    &\ll (HMN)^{1+\varepsilon}\left\{\frac{1}{H^{\frac{1}{2}}}+\frac{1}{X^{\frac{1}{2}}}\right\}.
\end{split}
\end{equation}

Combining estimates \eqref{S11} and \eqref{S12}  completes the proof of Theorem \ref{S_1(H,M,N)}.

\subsection{Proof of Theorem \ref{S_1^{dag}(H,M,N)}}

\leavevmode \\

We divide the proof into three parts according to the range of $X$.

(1) When $X\ll H$, let $f(h)$ be defined as
$$
f(h)=X\frac{(mn+\delta)^{\beta}h^\alpha}{M^{\beta}N^{\beta}H^{\alpha}}.
$$
For $m \sim M$ and $n \sim N$, we observe that the derivative satisfies $|f'(h)| \leqslant 1/2$. 
Applying Lemma \ref{Kusmin-Landau}, we obtain the following bound for the inner sum
$$
\left|\sum_{h\sim H}e\left(X\frac{(mn+\delta)^{\beta}h^\alpha}{M^{\beta}N^{\beta}H^{\alpha}}\right)\right|^*\ll \frac{H}{X}.
$$
Summing over $m$ and $n$, it follows that
$$
S^*(H,M,N)\ll \frac{HMN}{X}.
$$

(2) When $X\gg H^2$, Let $T(m, n)$ denote the inner sum
$$
T(m,n):=\left|\sum_{h\sim H}e\left(X\frac{(mn+\delta)^{\beta}h^\alpha}{M^{\beta}N^{\beta}H^{\alpha}}\right)\right|^*.
$$
According to Lemma \ref{Z*} , the total sum can be estimated as
\begin{equation*}
\begin{split}
 S^*(H,M,N)
    = \sum_{m\sim M}\sum_{n\sim N}T(m,n)
    \leqslant \int _{\frac{1}{2}}^{\frac{1}{2}}\left(\sum_{m\sim M}\sum_{n\sim N}\left|\sum_{h\sim H}e\left(X\frac{(mn+\delta)^{\beta}h^\alpha}{M^{\beta}N^{\beta}H^{\alpha}}\right)e(ht)\right|\right)\mathscr{L}(t)\mathrm{d}t.
\end{split}
\end{equation*}
The three-dimensional exponential sum in  brackets is of type \eqref{type1} and an application of Theorem \ref{S_1(H,M,N)} shows that it is
\begin{equation*}
\begin{split}
 S^*(H,M,N)
    &\ll (HMN)^{1+\varepsilon}\left\{\left(\frac{X}{MNH^2}\right)^{\frac{1}{4}}+\frac{1}{(MN)^{\frac{1}{4}}}+\frac{1}{H^{\frac{1}{2}}}+\frac{1}{X^{\frac{1}{2}}}\right\}\\
    &\ll (HMN)^{1+\varepsilon}\left\{\left(\frac{X}{MNH^2}\right)^{\frac{1}{4}}+\frac{1}{H^{\frac{1}{2}}}\right\}.
\end{split}
\end{equation*}

(3) when $H\ll X\ll H^2$, we set
$$
x(m,n)=X\frac{(mn+\delta)^\beta}{M^\beta N^\beta}\asymp X.
$$
The sum can then be written as
$$
    S^*(H,M,N)=\sum_{m\sim M}\sum_{n\sim N}\left|\sum_{h\sim H}e\Bigg(x(m,n)\bigg(\frac{h}{H}\bigg)^{\alpha}\Bigg)\right|^*.
$$
According to the Lemma \ref{dependence}, there exist two positive integers $L$ and $L_1$ which do not depend on $m$ and $n$, a positive real number $Y$ with 
$$
Y\asymp L_1\asymp \frac{Y}{H}, \quad 1\leqslant L\leqslant L_1, \quad Y\asymp X,
$$
and such that
$$
 S^*(H,M,N)\ll \frac{H}{X^{\frac{1}{2}}}\sum_{m\sim M}\sum_{n\sim N}\left|e\left(Y\frac{(mn+\delta)^{\tilde{\beta}}l^{\overline{\alpha}}}{M^{\tilde{\beta}}N^{\tilde{\beta}}L^{\overline{\alpha}}}\right)\right|^*+MNH^{\frac{1}{2}},
$$
where
$$
\tilde{\beta}=\frac{\beta}{1-\alpha}, \quad \overline{\alpha}=\frac{\alpha}{\alpha-1}.
$$
By Lemma \ref{Z*} and Theorem \ref{S_1(H,M,N)}, we get
\begin{equation*}
\begin{split}
 S^*(H,M,N)
    &\ll \frac{H}{X^{\frac{1}{2}}}\int_{\frac{1}{2}}^{\frac{1}{2}}\left(\sum_{m\sim M}\sum_{n\sim N}\bigg|\sum_{L\leqslant l\leqslant L_1}e\bigg(Y\frac{(mn+\delta)^{\tilde{\beta}}l^{\overline{\alpha}}}{M^{\tilde{\beta}}N^{\tilde{\beta}}L^{\overline{\alpha}}}e(lt)\bigg)\bigg|\right)\mathscr{L}(t)\mathrm{d}t+MNH^{\frac{1}{2}}\\
     &\ll \frac{H}{X^{\frac{1}{2}}}(LMN)^{1+\varepsilon}\left\{\left(\frac{X}{MNL^2}\right)^{\frac{1}{4}}+\frac{1}{(MN)^{\frac{1}{4}}}+\frac{1}{L^{\frac{1}{2}}}\right\}+MNH^{\frac{1}{2}}\\
     &\ll (HMN)^{1+\varepsilon}\left\{\left(\frac{X}{MNH^2}\right)^{\frac{1}{4}}+\left(\frac{X^2}{MNH^4}\right)^{\frac{1}{4}}+\frac{1}{H^{\frac{1}{2}}}\right\}\\
      &\ll (HMN)^{1+\varepsilon}\left\{\left(\frac{X}{MNH^2}\right)^{\frac{1}{4}}+\frac{1}{H^{\frac{1}{2}}}\right\}.
\end{split}
\end{equation*}
This completes the proof of Theorem 
\ref{S_1^{dag}(H,M,N)}.

\section{Proof of Theorem \ref{Main Theorem}}
\subsection{Preliminary lemmas }

\leavevmode \\

In this subsection, we present several lemmas essential for the proof of Theorem \ref{Main Theorem}. We begin by establishing an explicit expression for the error term $\Delta(x)$ introduced in \eqref{sum tau(x)}.

\begin{lemma}\cite[Theorem 4.5]{GK}\label{delta(x)}
For any $x\in \mathbb{R}$, we have
$$
\Delta(x)=-2\sum_{n\leqslant \sqrt{x}}\psi\left(\frac{x}{n}\right)+O(1),
$$
where $\psi =x-[x]-1/2$.
\end{lemma}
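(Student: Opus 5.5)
This is the classical Voronoi-type identity for $\Delta(x)$, obtained from Dirichlet's hyperbola method. For $x<1$ both sides are trivially $O(1)$, so we assume $x\geqslant 1$ and set $y=\sqrt{x}$.

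We begin by writing
$$
\sum_{n\leqslant x}\tau(n)=\sum_{mn\leqslant x}1 .
$$
The hyperbola method splits the lattice points under $mn=x$ into those with $m\leqslant y$, those with $n\leqslant y$, and removes the doubly counted square. This gives
$$
\sum_{mn\leqslant x}1=2\sum_{n\leqslant y}\left[\frac{x}{n}\right]-[y]^2 .
$$

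Next we substitute $[t]=t-\psi(t)-\tfrac12$ in each term. For the first sum,
$$
2\sum_{n\leqslant y}\left[\frac{x}{n}\right]=2x\sum_{n\leqslant y}\frac1n-2\sum_{n\leqslant y}\psi\left(\frac{x}{n}\right)-[y].
$$
For the square, writing $[y]=y-\psi(y)-\tfrac12$, we get
$$
[y]^2=x-2y\left(\psi(y)+\tfrac12\right)+O(1).
$$

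We then insert the harmonic-sum expansion
$$
\sum_{n\leqslant y}\frac1n=\log y+\gamma-\frac{\psi(y)+\tfrac12}{y}+O\left(\frac{1}{y^2}\right),
$$
which follows from Euler--Maclaurin, or equivalently from $\sum_{n\leqslant y}1/n=\log y+\gamma-\psi(y)/y-1/(2y)+O(y^{-2})$. Multiplying by $2x$ gives the main term $x\log x+2\gamma x$ together with a correction $-2y\left(\psi(y)+\tfrac12\right)+O(1)$.

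The remaining work is bookkeeping of the $O(\sqrt{x})$ terms, and this is the only delicate point. Collecting everything, we have:
\begin{itemize}
\item $-2y\left(\psi(y)+\tfrac12\right)$ coming from the harmonic sum;
\item $-[y]=-y+\psi(y)+\tfrac12$ coming from the first sum;
\item $-x+2y\left(\psi(y)+\tfrac12\right)$ coming from $-[y]^2$.
\end{itemize}
The $y\psi(y)$ terms cancel exactly between the first and third items. The constant multiples of $y$ combine as $-y-y+y=-y$, and one has to check that they vanish in total. The point is that the harmonic expansion must be done with the precise Euler--Maclaurin remainder, so that the constant parts of the $\sqrt{x}$ terms cancel, leaving only $O(1)$ plus a bounded term in $\psi(y)$.

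Carrying out this cancellation carefully yields
$$
\sum_{n\leqslant x}\tau(n)=x\log x+(2\gamma-1)x-2\sum_{n\leqslant \sqrt x}\psi\left(\frac{x}{n}\right)+O(1),
$$
and comparing with \eqref{sum tau(x)} gives the lemma. This is the standard computation in \cite[Theorem 4.5]{GK}, and we would follow it line by line.
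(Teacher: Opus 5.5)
\textbf{There is a concrete error in your argument, not just missing bookkeeping.} Your route (hyperbola method, then $[t]=t-\psi(t)-\tfrac12$, then an expansion of $\sum_{n\leqslant y}1/n$) is the standard argument behind the cited Theorem 4.5; the paper itself only quotes that result. The error is in your harmonic-sum expansion, which contains a spurious $-\frac{1}{2y}$. The correct statement is
\[
\sum_{n\leqslant y}\frac1n=\log y+\gamma-\frac{\psi(y)}{y}+O\left(\frac{1}{y^{2}}\right).
\]
This follows from Euler summation:
\[
\sum_{n\leqslant y}\frac1n=1+\psi(1)+\log y-\frac{\psi(y)}{y}-\int_1^y\frac{\psi(t)}{t^2}\,\mathrm{d}t,
\]
with $\psi(1)=-\tfrac12$ and $\gamma=\tfrac12-\int_1^\infty\psi(t)t^{-2}\,\mathrm{d}t$. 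Moreover $\int_y^\infty\psi(t)t^{-2}\,\mathrm{d}t=O(y^{-2})$, because $\psi$ has a bounded periodic antiderivative. As a check, at $y=N\in\mathbb{Z}$ the correct formula gives the familiar $\log N+\gamma+\frac{1}{2N}$, whereas yours gives $\log N+\gamma$.

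With your version, your own tally leaves $-x-\sqrt{x}+O(1)$. That would give $\Delta(x)=-2\sum_{n\leqslant\sqrt x}\psi(x/n)-\sqrt{x}+O(1)$, which contradicts the lemma. Your proposed remedy cannot work: the Euler--Maclaurin remainder is $O(y^{-2})$, so after multiplication by $2x$ it is only $O(1)$. It can never cancel a term of size $\sqrt{x}$.

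With the correct expansion the computation closes. The three pieces are:
\[
2x\sum_{n\leqslant y}\frac1n=x\log x+2\gamma x-2y\psi(y)+O(1),
\]
\[
-[y]^2=-x+2y\psi(y)+y-\{y\}^2, \qquad -[y]=-y+\{y\}.
\]
The $y\psi(y)$ terms cancel, and the linear terms $-y+y$ cancel. What remains, $\{y\}-\{y\}^2+O(1)$, is bounded. This gives
\[
\sum_{n\leqslant x}\tau(n)=x\log x+(2\gamma-1)x-2\sum_{n\leqslant\sqrt x}\psi\left(\frac{x}{n}\right)+O(1),
\]
which is the lemma. So the proof works once that one formula is fixed. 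However, the step you called ``the only delicate point'' is exactly where your write-up goes wrong. Asserting that the $\sqrt{x}$ terms cancel, when your own numbers show they do not, leaves the proposal incomplete as written.
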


The second lemma provides a trigonometric approximation for the function $\psi(x)$.
\begin{lemma}\cite[Lemma 4.1]{BDHPS}\label{psi}    
    For $x\geqslant 1$ and $H\geqslant1$, we have
    \begin{eqnarray*}
        \psi(x)=-\sum_{1\leqslant |h| \leqslant H}\Phi\left(\frac{h}{H+1}\right)\frac{e(hx)}{2\pi ih}+R_H(x),
    \end{eqnarray*}
    where $e(t):=e^{2\pi it}$, $\Phi(t):=\pi t(1-|t|)\cot(\pi t)+|t|$ and the error term $R_H(x)$ satisfies 
    \begin{eqnarray*}
        |R_H(x)|\leqslant \frac{1}{2H+2}\sum_{0\leqslant |h|\leqslant H}\left(1-\frac{|h|}{H+1}\right)e(hx).
    \end{eqnarray*}
\end{lemma}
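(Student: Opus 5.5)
Since this is \cite[Lemma 4.1]{BDHPS}, which is Vaaler's trigonometric approximation of the sawtooth function, I would follow Vaaler's extremal-function argument. It has three steps: a pointwise inequality on the real line, periodization, and a Fourier-coefficient computation. Write $N=H+1$. Let
\[
K(x)=\left(\frac{\sin \pi x}{\pi x}\right)^2
\]
be the Fejér kernel on $\mathbb{R}$, whose Fourier transform is $(1-|\xi|)_+$. The periodic Fejér kernel is then
\[
\sum_{|h|\leqslant H}\left(1-\frac{|h|}{N}\right)e(hx)=\frac{1}{N}\left(\frac{\sin \pi N x}{\sin \pi x}\right)^2 ,
\]
which is nonnegative. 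So the claimed bound says $|\psi-\psi_H^*|$ is at most $\frac{1}{2N}$ times this kernel, where $\psi_H^*$ denotes the trigonometric polynomial in the statement.

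\textbf{Step 1 (real-line extremal function).} Introduce Vaaler's entire function
\[
V(z)=\left(\frac{\sin \pi z}{\pi}\right)^2\left(\sum_{n\in\mathbb{Z}}\frac{\operatorname{sgn}(n)}{(z-n)^2}+\frac{2}{z}\right),
\]
which has exponential type $2\pi$. I would prove the pointwise inequality $|\operatorname{sgn}(x)-V(x)|\leqslant K(x)$ for all real $x$. Equivalently, $V\pm K$ are respectively a majorant and a minorant of $\operatorname{sgn}$. At integers the inequality holds by interpolation, since $V(n)=\operatorname{sgn}(n)$ and $V'(n)=0$ for $n\neq0$. Between integers one compares $\operatorname{sgn}(x)-V(x)$ with $\sin^2(\pi x)/(\pi x)^2$ using the partial-fraction identity $\sum_n (z-n)^{-2}=\pi^2/\sin^2\pi z$. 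This reduces the claim to a one-variable inequality on $(0,1)$, which one extends by monotonicity of the relevant tail sums. \emph{This step is the main obstacle.} I would try first to write $\operatorname{sgn}(x)-V(x)$ as $\frac{\sin^2\pi x}{\pi^2}\sum_n \frac{c_n}{(x-n)^2}$ with explicit signs $c_n$, and then bound this term by term by $\frac{\sin^2\pi x}{\pi^2 x^2}$.

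\textbf{Step 2 (periodization).} Use the identity $\psi(x)=\frac12\lim\sum_{|n|\leqslant M}\operatorname{sgn}$-type representation, more precisely
\[
\psi(x)=-\frac{1}{2}\sum_{n}\bigl(\operatorname{sgn}(x-n)-\operatorname{sgn}(-n)\bigr)+\text{linear term},
\]
rescaled by $N$. Replace $\operatorname{sgn}$ by $\frac12 V(N(\cdot))$ and $K$ by $\frac12 K(N(\cdot))$, and sum over translates. The pointwise bound of Step 1 then periodizes to
\[
|\psi(x)-\psi_H^*(x)|\leqslant \frac{1}{2N}\sum_{n}N\,K(N(x-n))\Big/N .
\]
By Poisson summation the right-hand side equals $\frac{1}{2N}\sum_{|h|<N}(1-|h|/N)e(hx)$. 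Because $\widehat{K(N\cdot)}$ is supported in $|\xi|\leqslant N$, only frequencies $|h|\leqslant H$ survive, which gives exactly the stated majorant.

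\textbf{Step 3 (Fourier coefficients).} Compute the Fourier transform of $V'$ (or of $V-\operatorname{sgn}$ in the distributional sense). This shows that the periodized approximant has coefficients $-\Phi(h/N)/(2\pi i h)$ for $1\leqslant|h|\leqslant H$ and zero constant term, with $\Phi(t)=\pi t(1-|t|)\cot(\pi t)+|t|$. The computation is a direct evaluation of $\int(1-|\xi|)\,\cdot$ against the transform of $\sum\operatorname{sgn}(n)(z-n)^{-2}$, followed by Poisson summation. It is routine once Step 1 is in place.
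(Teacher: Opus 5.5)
The paper gives no proof of this lemma. It imports it verbatim from \cite{BDHPS}, where it is Vaaler's theorem on the trigonometric approximation of $\psi$. Your three steps are exactly Vaaler's original argument: the extremal pair $V\pm K$ for $\operatorname{sgn}$, periodization at scale $N=H+1$, and the Fourier transform of $J=\frac12V'$. So your route is the proof that sits behind the citation. Citing buys brevity and reproving makes the lemma self-contained, and your outline is sound. However, you have misjudged where the work lies, and Step 2 contains a normalization error.

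\textbf{Step 1 is elementary.} For $x>0$, subtract $V$ from $1=\frac{\sin^2\pi x}{\pi^2}\sum_n(x-n)^{-2}$ to get
\[
1-V(x)=\frac{\sin^2\pi x}{\pi^2}\Bigl(\frac{1}{x^2}+2\sum_{m\geqslant 1}\frac{1}{(x+m)^2}-\frac{2}{x}\Bigr).
\]
Hence $|1-V(x)|\leqslant K(x)$ is equivalent to $\frac1x-\frac1{x^2}\leqslant\sum_{m\geqslant1}(x+m)^{-2}\leqslant\frac1x$. This follows by telescoping $\frac{1}{(x+m)(x+m+1)}\leqslant\frac{1}{(x+m)^2}\leqslant\frac{1}{(x+m-1)(x+m)}$, together with $\frac{1}{x+1}\geqslant\frac{x-1}{x^2}$. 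The case $x<0$ follows because $V$ is odd. Your idea of writing $\operatorname{sgn}-V$ as $\frac{\sin^2\pi x}{\pi^2}\sum_n c_n(x-n)^{-2}$ cannot work literally, since the term $2/x$ has a simple pole.

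\textbf{Step 2.} The approximant is $F(x)=x-\frac12\sum_n\bigl(V(N(x-n))-\operatorname{sgn}(-n)\bigr)$. Step 1 gives $|\psi(x)-F(x)|\leqslant\frac12\sum_nK(N(x-n))$, and Poisson summation turns this into exactly $\frac{1}{2N}\sum_{|h|\leqslant H}(1-\frac{|h|}{N})e(hx)$. Your displayed right-hand side carries a spurious extra factor $1/N$. It is false as written: as $x\to0^+$ both sides of the correct bound tend to $\frac12$. Also, the $\operatorname{sgn}$-identity gives the midpoint value at integers, whereas the paper's $\psi(n)=-\frac12$, so integer points need a one-sided limit.

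\textbf{Step 3 is where the real computation lies.} Write $V(x)=\sum_n\operatorname{sgn}(n)K(x-n)+2xK(x)$ and use $\sum_{n\neq0}\operatorname{sgn}(n)e(-n\xi)=-i\cot\pi\xi$ in the sense of distributions. This gives $\widehat V(\xi)=\frac{1}{\pi i}\bigl(\pi(1-|\xi|)\cot\pi\xi+\operatorname{sgn}\xi\bigr)$ for $|\xi|<1$ and $\widehat V=0$ for $|\xi|>1$. Hence $\widehat J=\Phi$ on $(-1,1)$ and $\widehat J=0$ outside. Poisson summation and $\Phi(0)=1$ then give $F'(x)=1-\sum_nNJ(N(x-n))=-\sum_{1\leqslant|h|\leqslant H}\Phi(h/N)e(hx)$. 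This step needs $J(x)\ll(1+|x|)^{-2}$ to justify termwise differentiation and Poisson summation. Finally, the constant of integration vanishes because $F$ is odd.
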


The subsequent lemma offers an optimized version of van der Corput’s B-process (stationary phase method for exponential sums).

\begin{lemma}\cite[Theorem 1.2]{V}\label{lem:improved B process} 
Suppose that $f(x)$ and $g(x)$ are functions of real value with $f$ continuously differentiable four times and $g$ twice continuously differentiable on the interval $[a,b]$. Suppose that there are positive constants $M$, $T$, and $U$, with $M\asymp b-a$, such that, for $x\in [a,b]$,

   \[f^{''}(x)\geqslant T/M^2, \quad |f^{(2+r)}(x)|\ll T/M^{2+r}, \quad |g^{(r)}(x)|\ll U/M^{r},\]
for $r=0,1,2$. Then
\begin{equation*}
\begin{split}
\sideset{}{^*} \sum_{a \leqslant n \leqslant b} g(n)e(f(n))
=&\sideset{}{^*} \sum_{f^{'}(a)\leqslant r\leqslant f^{'}(b) } \frac{g(x_r)e(f(x_r)-rx_r+1/8)}{\sqrt{f^{''}(x_r)}}+O\bigg(U(R(a)+R(b))\bigg)\\
&\;\;\;+O\Bigg(U\bigg(\log{(f^{'}(b)-f^{'}(a)+2)}+\frac{M}{T}+\frac{T}{M^2}+1\bigg)\Bigg),
\end{split}
\end{equation*}
where $x_r$ is the unique solution to $f^{'}(x_r)=r$ in the interval $[a,b]$ and
\[R(\mu)=
\begin{cases}
0, & \|f^{'}(\mu)\|=0,\\
\min\left\{\frac{M}{\sqrt{T}}, \frac{1}{\|f^{'}(\mu)\|}\right\},&\|f^{'}(\mu)\|\neq0.
\end{cases}
\]
\end{lemma}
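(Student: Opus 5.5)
This lemma is quoted from \cite[Theorem 1.2]{V}, so in the paper one simply invokes it; still, here is how I would prove it directly. It is a sharpened version of van der Corput's B-process. The plan has three stages: first replace the sum by a short sum of exponential integrals (truncated Poisson summation), then evaluate each integral by stationary phase, and finally control the boundary contributions carefully, since those produce the terms $R(a)$ and $R(b)$.

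\emph{Step 1 (truncated Poisson).} Let $\alpha=f'(a)$ and $\beta=f'(b)$; these satisfy $\alpha<\beta$ because $f''>0$. Write the weighted sum with halved endpoints via the Euler--Maclaurin/Poisson identity:
$$\sideset{}{^*}\sum_{a\leqslant n\leqslant b} g(n)e(f(n))=\sum_{r\in\mathbb{Z}}\int_a^b g(x)e(f(x)-rx)\,dx .$$
This series converges conditionally, and the halving of the endpoint terms is exactly what makes the identity hold. Split the $r$-range into three parts:
\begin{itemize}
\item[(i)] $r\in[\alpha-1,\beta+1]$;
\item[(ii)] $|r-\alpha|,|r-\beta|$ of size between $1$ and some large multiple of $T/M$;
\item[(iii)] the far tail.
\end{itemize}
For $r$ outside $[\alpha,\beta]$ the phase $f(x)-rx$ has no stationary point on $[a,b]$. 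Integrate by parts once, using $|f'(x)-r|\geqslant \operatorname{dist}(r,[\alpha,\beta])$ together with the bounds on $g$ and $g'$. Each such integral is then a boundary term of size $\ll U/|f'(a)-r|$ or $\ll U/|f'(b)-r|$, plus an integrated term. Summing over $r$ gives $O(U\log(\beta-\alpha+2))$ plus contributions concentrated at the endpoints. For the tail (iii) I would integrate by parts twice to get absolute convergence; this yields the $U\,T/M^2$ and $U$ terms.

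\emph{Step 2 (stationary phase).} For $\alpha\leqslant r\leqslant\beta$, the point $x_r$ with $f'(x_r)=r$ is unique. I would apply a stationary phase lemma with explicit error terms, which uses $f^{(3)},f^{(4)}\ll T/M^{3},T/M^4$ and $g',g''\ll U/M,U/M^2$:
$$\int_a^b g(x)e(f(x)-rx)\,dx=\frac{g(x_r)e(f(x_r)-rx_r+1/8)}{\sqrt{f''(x_r)}}+O\!\Big(\frac{U}{|f'(a)-r|}+\frac{U}{|f'(b)-r|}\Big)+O\!\Big(\frac{UM}{T}\Big)\cdot(\text{decay in }r).$$
When $r$ is within $\sqrt{T}/M$ of an endpoint value, the boundary terms must be capped by $UM/\sqrt T$ rather than by $U/|f'(\mu)-r|$. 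This cap comes from the second-derivative bound $\int\ll M/\sqrt T$. The main-term sum over $r$ produces the stated sum, with the halving convention at $r=\alpha,\beta$.

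\emph{Step 3 (the endpoint terms; main obstacle).} Summing the endpoint errors $U\min\{M/\sqrt T,\,1/|f'(\mu)-r|\}$ over all $r$ naively gives $U\log$ plus $U$ times the term with $r$ nearest to $f'(\mu)$. That nearest term is exactly $U\min\{M/\sqrt T,1/\|f'(\mu)\|\}=U R(\mu)$. The delicate point is to show that the remaining $r$ contribute only $O(U\log(\beta-\alpha+2))$. This requires the boundary terms from the non-stationary integrals (Step 1) and from the stationary ones (Step 2) to combine with the correct signs, so that they telescope rather than accumulate to $\log(T/M)$. I would handle this by writing each boundary term explicitly as $\frac{g(\mu)e(f(\mu)-r\mu)}{2\pi i(f'(\mu)-r)}$ to leading order. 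Summing these over $r$ gives a conjugate-Fourier (cotangent-type) series in $r$ at the fixed point $f'(\mu)$. Its partial sums are bounded by $\ll 1/\|f'(\mu)\|$ and cancel when $f'(\mu)\in\mathbb{Z}$; the latter is the case $R(\mu)=0$, consistent with the halving convention. I expect this cancellation bookkeeping to be the hardest step; the rest is standard stationary-phase estimation.
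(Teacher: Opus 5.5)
The paper gives no proof of this lemma; it is quoted verbatim from \cite[Theorem 1.2]{V}, so your outline has to stand on its own. Its architecture (Poisson summation, then stationary phase, then bookkeeping of endpoint errors) is the classical one. As written, however, it breaks at the tail of the Poisson series and puts the difficulty in the wrong place.

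In Step 1(iii), integrating by parts twice does \emph{not} give absolute convergence. The first integration by parts already produces the boundary terms $g(\mu)e(f(\mu)-r\mu)/\bigl(2\pi i(f'(\mu)-r)\bigr)$, $\mu\in\{a,b\}$, of modulus $\asymp |g(\mu)|/|r|$ for all large $|r|$. A second integration by parts only improves what is left over; its boundary terms, of size $U/(Mk^2)+UT/(M^2k^3)$ with $k=|f'(\mu)-r|$, are where $U/M+UT/M^2$ can come from. Since $\sum_r 1/|r|$ diverges, the first-order terms can only be controlled by cancellation in $r$. Let $r_0$ be the integer nearest $f'(\mu)$. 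Pairing $r_0+k$ with $r_0-k$ turns the tail of $\sum_r e(-r\mu)/(f'(\mu)-r)$ into a tail of $e(-r_0\mu)\sum_{k\geqslant 1}2i\sin(2\pi k\mu)/k$ plus an absolutely convergent series, and the partial sums of $\sum_{k\geqslant1}\sin(2\pi k\mu)/k$ are uniformly bounded. So the device of your Step 3 is needed, but for the infinite tail. The simpler repair is to have no tail at all: start, with your $\alpha=f'(a)$, $\beta=f'(b)$, from the truncated Poisson formula (Kusmin--Landau plus partial summation)
\[
\sum_{a<n\leqslant b}g(n)e(f(n))=\sum_{\alpha-\eta<r<\beta+\eta}\int_a^b g(x)e(f(x)-rx)\,dx+O\bigl(U\log(\beta-\alpha+2)\bigr),\qquad 0<\eta<1.
\]

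Once $r$ runs over only $O(\beta-\alpha+2)$ values, the ``delicate point'' of Step 3 disappears. Since $T/M^2\leqslant f''\ll T/M^2$ on an interval of length $\asymp M$, we have $\beta-\alpha\asymp T/M$, so an accumulation of size $U\log(T/M+2)$ is already admissible. Summing $U\min\{M/\sqrt T,1/|f'(\mu)-r|\}$ naively over these $r$ gives at most $U\min\{M/\sqrt T,1/\|f'(\mu)\|\}$ from the integer nearest $f'(\mu)$, plus a harmonic sum $O(U\log(\beta-\alpha+2))$ from the rest. No telescoping is required.

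What your plan genuinely lacks is the case $\|f'(\mu)\|=0$. There, for $r_0=f'(\mu)$, the stationary point is the endpoint itself. Your capped bound gives only $O(UM/\sqrt T)$ for that integral, so it proves the statement with $R(\mu)$ replaced by $M/\sqrt T$ rather than $0$. This loss is not absorbed by the remaining error terms: take $T=M$. The $r_0$ term of your cotangent-type series is singular, so no cancellation in that series can supply the missing bound. You need the one-sided (half-Fresnel) form of stationary phase. The integral equals one half of $g(x_{r_0})e(f(x_{r_0})-r_0x_{r_0}+1/8)/\sqrt{f''(x_{r_0})}$, up to $O(UM/T)$ and the usual error from the other endpoint. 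The $O(UM/T)$ arises because the odd-order contributions of $g'$ and $f'''$ no longer cancel on a half-line. This halving is exactly what the starred dual sum records at $r=f'(a),f'(b)$.
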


The following lemma is also crucial to the proof of  Theorem \ref{Main Theorem}.

\begin{lemma}\cite[Lemma 3]{J}\label{lem:min}
    Assume that when $x\sim N$, $f(x)\ll P$, $f^{'}(x)\gg \Delta$, then
\[\sum_{n\sim N}\min{\left\{Q, \frac{1}{\|f(n)\|}\right\}}\ll(P+1)\left(Q+\frac{1}{\Delta}\right)\log{\left(2+\frac{1}{\Delta}\right)}.\]
\end{lemma}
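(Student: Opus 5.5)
The plan is to count the integers $n\sim N$ according to which unit interval contains $f(n)$, and then use the spacing forced by $f'\gg\Delta$ inside each unit interval. We read the hypothesis $f'(x)\gg\Delta$ in the usual way: $f'(x)\geqslant c\Delta>0$ on $(N,2N]$. In particular $f$ is strictly increasing there.

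\textbf{Step 1 (reduction to unit intervals).} Since $|f(x)|\leqslant CP$ for $x\sim N$, the values $f(n)$ lie in $[-CP,CP]$. This interval is covered by $O(P+1)$ intervals $I_k=[k,k+1)$ with $k\in\mathbb{Z}$. So it suffices to show that, for each fixed $k$,
$$
\sum_{\substack{n\sim N\\ f(n)\in I_k}}\min\left\{Q,\frac{1}{\|f(n)\|}\right\}\ll \left(Q+\frac1\Delta\right)\log\left(2+\frac1\Delta\right).
$$
Summing this over the $O(P+1)$ values of $k$ then gives the lemma.

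\textbf{Step 2 (spacing).} By the mean value theorem, for $n<n'$ both $\sim N$ we have $f(n')-f(n)\geqslant c\Delta(n'-n)\geqslant c\Delta$. Hence the points $f(n)-k\in[0,1)$ arising in a fixed $I_k$ are $c\Delta$-separated. If $c\Delta\geqslant 1$, at most one point lies in $I_k$, and its contribution is at most $Q$. Otherwise at most $1/(c\Delta)+1$ points lie in $I_k$.

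\textbf{Step 3 (ordering by distance to $\mathbb{Z}$).} For $t\in[0,1)$ we have $\|t\|=\min\{t,1-t\}$. Split the points into those in $[0,1/2]$ and those in $(1/2,1)$. In the first group, list the points increasingly as $t_1<t_2<\cdots$. Separation gives $t_j\geqslant (j-1)c\Delta$, so $\|t_j\|=t_j\geqslant (j-1)c\Delta$. The same holds in the second group using $1-t$. We bound the terms $j=1,2$ in each group by $Q$. For the remaining terms we use
$$
\frac{1}{\|t_j\|}\leqslant \frac{1}{(j-1)c\Delta},
$$
with $j\leqslant 1/(c\Delta)+2$. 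The harmonic sum is therefore $\ll \Delta^{-1}\log(2+1/\Delta)$, so each $I_k$ contributes $\ll Q+\Delta^{-1}\log(2+1/\Delta)$, which is the bound claimed in Step 1.

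There is no real obstacle here. The only points requiring care are the bookkeeping at the boundary: points near both endpoints $k$ and $k+1$, which is handled by the two-sided ordering in Step 3, and the degenerate case $\Delta\gg1$. In that case each unit interval contains $O(1)$ points, and the factor $\log(2+1/\Delta)\asymp1$ absorbs the constants.
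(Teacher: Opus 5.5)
Your proof is correct. The paper does not prove this lemma; it simply cites it. Your argument is the standard proof of this estimate: you group the $n\sim N$ by the integer part of $f(n)$ into $O(P+1)$ classes, then use the $c\Delta$-separation of the points within each class to bound its contribution by $O(Q)$ plus a harmonic sum $\ll \Delta^{-1}\log(2+1/\Delta)$.

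One remark is worth adding. In the paper's application, $f(d)=hx/((d+\delta)L^2)$ is decreasing, so the hypothesis is really used in the form $|f'|\geqslant c\Delta$. Your argument covers this verbatim after replacing $f$ by $-f$, since $\|-t\|=\|t\|$ and, by Darboux's theorem, $f'$ cannot change sign when $|f'|\geqslant c\Delta>0$.
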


Our final lemma is derived from a result by Robert and Sargos concerning three-dimensional exponential sums.
\begin{lemma}\cite[Theorem 3]{RS}\label{three-dimensional exponential sums}
   Let $$
   S=\sum_{h\sim H}\sum_{n\sim N}\left|\sum_{m\sim M}e\left(X\frac{m^{\alpha}h^{\beta}n^{\gamma}}{M^{\alpha}H^{\beta}N^{\gamma}}\right)\right|^*,
   $$
   where $H$, $M$, $N$ are positive integers, $X$ is a real number greater than one, $\alpha, \beta, \gamma$ are fixed real  numbers such that $\alpha(\alpha-1)\beta\gamma \neq 0$, then we have
   $$
   S\ll (HMN)^{1+\varepsilon}\left\{\left(\frac{X}{HNM^2}\right)^{\frac{1}{4}}+\frac{1}{M^{\frac{1}{2}}}+\frac{1}{X}\right\}.
   $$
\end{lemma}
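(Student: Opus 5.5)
The plan is to follow the three-range structure already used for Theorem \ref{S_1^{dag}(H,M,N)}, with the pair $(h,n)$ playing the role of the pair $(m,n)$ there. The perturbed product $(mn+\delta)^\beta$ is replaced by the two-variable monomial $h^\beta n^\gamma$. Write $x(h,n)=X h^\beta n^\gamma/(H^\beta N^\gamma)\asymp X$, so that the inner sum is $\bigl|\sum_{m\sim M} e(x(h,n)(m/M)^\alpha)\bigr|^*$. The ranges are $X\ll M$, $X\gg M^2$, and $M\ll X\ll M^2$, and each gives one of the terms in the stated bound.

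\textbf{Small $X$.} When $X\ll M$ (with a small enough implied constant) the phase has derivative of size $X/M\leqslant 1/2$. It is also monotone, since $\alpha(\alpha-1)\neq0$. Lemma \ref{Kusmin-Landau} then bounds each inner maximal sum by $\ll M/X$, so $S\ll HMN/X$.

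\textbf{Large $X$.} When $X\gg M^2$, Lemma \ref{Z*} with $k=1$ removes the maximum at a cost of $\log M$ and a twist $e(mt)$. The twist is harmless, because $b(m)=e(mt)$ is an admissible unimodular coefficient. Next apply Cauchy over $(h,n)$ and open the square in $m_1,m_2$. This produces a bilinear form $\sum_{h,n}\sum_{m_1,m_2} e(Xu(h,n)v(m_1,m_2))$ with
\[
u(h,n)=h^\beta n^\gamma/(H^\beta N^\gamma),\qquad v(m_1,m_2)=(m_1^\alpha-m_2^\alpha)/M^\alpha .
\]
Apply Lemma \ref{double large sieve inequality} to this form. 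The $v$-side count is Lemma \ref{lemma:l1-4}, which gives $M^{4+\varepsilon}(M^{-2}+X^{-1})$. For the $u$-side we need the analogue of Lemma \ref{lemma：mn},
\[
\#\{(h_1,n_1,h_2,n_2):|u(h_1,n_1)-u(h_2,n_2)|\leqslant 1/X\}\ll (HN)^{2+\varepsilon}\Bigl(\frac{1}{HN}+\frac1X\Bigr).
\]
Multiplying out gives terms $(X/(HNM^2))^{1/4}$, $M^{-1/2}$, $(HN)^{-1/4}$ and $X^{-1/4}$. In the range $X\gg M^2$ I expect the last two to be dominated by the first two, or else to be discarded by choosing the better of the two estimates. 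I will check this explicitly, as in the passage from \eqref{S11}.

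\textbf{Middle range.} When $M\ll X\ll M^2$, apply Lemma \ref{dependence} to each inner sum. The dual lengths $L\asymp L_1\asymp X/M$ and the new frequency $\rho(x(h,n))=Y h^{\tilde\beta}n^{\tilde\gamma}/(H^{\tilde\beta}N^{\tilde\gamma})$ are uniform in $(h,n)$, with $Y\asymp X$, $\tilde\beta=\beta/(1-\alpha)$, $\tilde\gamma=\gamma/(1-\alpha)$. This gives
\[
S\ll \frac{M}{X^{1/2}}\sum_{h,n}\Bigl|\sum_{L\leqslant l\leqslant L_1}e(\cdots)\Bigr|^*+HNM^{1/2}.
\]
The dual sum has length $L\asymp X/M$ and frequency $Y\asymp X\gg L^2$, so it falls in the large-$X$ regime. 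Feeding it through the large-$X$ estimate and substituting $L\asymp X/M$ returns $(HMN)^{1+\varepsilon}\{(X/(HNM^2))^{1/4}+M^{-1/2}\}$, by the same bookkeeping as in case (3) of Theorem \ref{S_1^{dag}(H,M,N)}.

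\textbf{Main obstacle.} The hard step is the spacing lemma for the monomials $h^\beta n^\gamma$. Unlike Lemma \ref{lemma：mn}, it cannot be reduced to a divisor bound once $\beta\neq\gamma$. My first attempt is to take logarithms, so the condition becomes $|\beta\log(h_1/h_2)+\gamma\log(n_1/n_2)|\ll 1/X$. I would fix $h_1,h_2$ and count the pairs $(n_1,n_2)$ with $n_1/n_2$ in a short interval around $(h_2/h_1)^{\beta/\gamma}$. This uses the fact that Farey-type fractions with bounded numerator and denominator are well spaced, and gives $\ll N+N^2/X$ pairs for each interval. Summing the short intervals over $(h_1,h_2)$, again with Farey spacing applied to $h_1/h_2$, should give $(HN)^{1+\varepsilon}+(HN)^2/X$. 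If this fails, I would invoke the corresponding counting lemma of Robert--Sargos / Fouvry--Iwaniec directly.
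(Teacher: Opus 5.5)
The paper gives no proof of this lemma; it quotes Robert--Sargos. Your three-range scheme is essentially their argument, and it is the same skeleton the paper uses for Theorem \ref{S_1^{dag}(H,M,N)}. The bookkeeping checks out.
\begin{itemize}
\item For $X\gg M^2$, the double large sieve produces extra terms $(HN)^{3/4}M$ and $HNMX^{-1/4}$. These are dominated by $(HN)^{3/4}M^{1/2}X^{1/4}$ and $HNM^{1/2}$ precisely because $X\geqslant M^2$.
\item In the middle range, $Y\asymp X\gg L^2$, and the new exponents still satisfy $\overline{\alpha}(\overline{\alpha}-1)\tilde\beta\tilde\gamma\neq0$. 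The expression $MX^{-1/2}\cdot HNL\{(X/(HNL^2))^{1/4}+L^{-1/2}\}$ with $L\asymp X/M$ gives back the two target terms.
\end{itemize}
There is one small omission in the ranges. Kusmin--Landau needs $X\leqslant cM$ with $c$ small, while Lemma \ref{dependence} needs $X\geqslant C_\alpha M$ to guarantee $L\geqslant 1$. On the leftover range $X\asymp M$, the second-derivative test bounds every partial sum by $X^{1/2}+MX^{-1/2}\ll M^{1/2}$, so $S\ll HNM^{1/2}$ there.

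The step that would fail as written is your sketch of the spacing lemma for $h^\beta n^\gamma$. Suppose you bound the number of $(n_1,n_2)$ attached to each $(h_1,h_2)$ uniformly by $N+N^2/X$. Summing over the $\asymp H^2$ pairs gives $H^2N+(HN)^2/X$, which is too large. Grouping the $(h_1,h_2)$ by the reduced fraction $h_1/h_2$ does not help while the $n$-count stays uniform. There are $\asymp A^2$ reduced fractions of height $\asymp A$, each with $\ll H/A$ representatives, so you get $\sum_A AH(N+N^2/X)\asymp H^2(N+N^2/X)$ again.

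The fix is to use the heights on both sides simultaneously.
\begin{itemize}
\item Write $h_1/h_2=a/b$ and $n_1/n_2=c/d$ in lowest terms, with heights $\asymp A$ and $\asymp C$ (dyadic).
\item These fractions are $\gg A^{-2}$-spaced and $\gg C^{-2}$-spaced respectively. Counting from whichever side has fewer fractions, the number of pairs $(a/b,c/d)$ with $|\beta\log(a/b)+\gamma\log(c/d)|\ll 1/X$ is $\ll\min(A^2,C^2)+A^2C^2/X$.
\item Each pair lifts to $\ll (H/A)(N/C)$ quadruples, giving $HN\min(A/C,C/A)+HNAC/X$.
\item Summing over dyadic $A\leqslant 2H$, $C\leqslant 2N$ yields $\ll HN\log(2HN)+(HN)^2/X$.
\end{itemize}
This is the Fouvry--Iwaniec spacing lemma for two-variable monomials, the standard input at this point. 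Your fallback of citing it directly is the right move; the Farey sketch only works in the symmetric form above.
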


\subsection{A key inequality}

\leavevmode \\

For $x > 1$ and positive integers $H, D_1, D_2, L$, we define the following exponential sum with a
constant perturbation $\delta$: 
$$
\mathfrak{S}(x,D,H):=\sum_{h\sim H}\frac{1}{h}\sum_{d_1\sim D_1}\sum_{d_2\sim D_2}\sum_{l\sim L}e\left(\frac{hx}{l(d_1d_2+\delta)}\right)
$$
where   $D_1$, $D_2$ satisfy the condition $D_1 D_2 \sim D$. In this subsection, we establish an upper bound for $\mathfrak{S}(x, D, H)$, which serves as a crucial estimate in the proof of Theorem \ref{Main Theorem}.

\begin{proposition}\label{proposition}
    Let $\delta\in\{0,1\}$, $1\leqslant D<x$, and $L^2D\leqslant x$. For any $\varepsilon > 0$, we have the following upper bound
\begin{equation*}
\begin{split}
\mathfrak{S}(x,D,H)\ll
        &\bigg(x^{\frac{11}{30}}H^{\frac{1}{4}}D^{\frac{3}{8}}+x^{\frac{7}{30}}D^{\frac{3}{4}}+x^{\frac{23}{60}}H^{\frac{1}{2}}D^{\frac{1}{4}}
        +x^{\frac{4}{15}}H^{\frac{1}{2}}D^{\frac{1}{2}}\\
        & \hskip 4mm+x^{-\frac{1}{4}}H^{-\frac{1}{2}}D^{\frac{5}{4}}+D
        +x^{-\frac{2}{5}}HD^{\frac{3}{2}}\bigg)x^\varepsilon.
\end{split}
\end{equation*}
\end{proposition}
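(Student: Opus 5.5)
The plan is to estimate $\mathfrak{S}(x,D,H)$ in two different ways and take the minimum, or a convex combination, of the resulting bounds. The two estimates use the structure with and without a B-process in the $l$-variable. Write $X:=xH/(LD)$ for the size of the phase $hx/(l(d_1d_2+\delta))$. First split off the weight $1/h$ by partial summation in $h$, so that it suffices to bound
\[
H^{-1}\sum_{d_1\sim D_1}\sum_{d_2\sim D_2}\sum_{l\sim L}\left|\sum_{h\sim H}e\left(\frac{hx}{l(d_1d_2+\delta)}\right)\right|^{*}.
\]

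\textbf{First estimate (direct).} Group $m=d_1$, $n=d_2$ and regard the phase as $X\frac{(mn+\delta)^{-1}}{(MN)^{-1}}\cdot\frac{h}{H}\cdot\frac{L}{l}$. Pairing $(h,l)$ as the "$h$-variable" and $(d_1,d_2)$ as the perturbed product, the sum is of type \eqref{type1} with $\beta=-1$ and the exponent in $h$ equal to $1$. Strictly, the condition $\alpha(\alpha-1)\beta\ne0$ fails for $\alpha=1$, so instead treat $l$ as the power variable, with $\alpha=-1$, and absorb $h$ into the coefficients $a$. Theorem \ref{S_1(H,M,N)} then applies with $(H,M,N)\to(L,D_1,D_2)$, summed trivially over $h$ with weight $1/h$. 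This gives
\[
\mathfrak{S}\ll (LD)^{1+\varepsilon}\left\{\left(\frac{X}{DL^2}\right)^{1/4}+D^{-1/4}+L^{-1/2}+X^{-1/2}\right\}.
\]
This is efficient when $L$ is large.

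\textbf{Second estimate (B-process in $l$).} When $L$ is small relative to $X/L$, apply Lemma \ref{lem:improved B process} to the $l$-sum with $f(l)=hx/(l(d_1d_2+\delta))$. Here $f''\asymp X/L^2$, and the dual variable $r$ has length $\asymp X/L$. The stationary point is $l_r=\sqrt{hx/(r(d_1d_2+\delta))}$, giving the new phase $2\sqrt{hxr/(d_1d_2+\delta)}$ with amplitude $\asymp L/\sqrt X$. The boundary terms $R(a),R(b)$ are controlled by Lemma \ref{lem:min}, summing $\min\{L/\sqrt{X},1/\|f'(\mu)\|\}$ over $h$ or over $d_1$. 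This produces the terms like $D$ and $x^{-1/4}H^{-1/2}D^{5/4}$; the $\log$ and $M/T$ error terms are absorbed there as well. The transformed triple sum over $(h,r)$ and $(d_1d_2)$ with phase $\sqrt{hr}\,(d_1d_2+\delta)^{-1/2}$ is treated in two ways. One is Theorem \ref{S_1^{dag}(H,M,N)}, with $r$ as the power variable and $\beta=-1/2$, using the dependence-removal Lemma \ref{dependence} so that the $r$-range is independent of $h,d$. The other is Lemma \ref{three-dimensional exponential sums} after dropping $\delta$ when $\delta=0$, or treating $\delta=1$ as a perturbation absorbed by the $|\delta|\le MN$ results.

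\textbf{Optimization.} Finally, split the $L$-range at a threshold $L_0$. Use the direct bound for $L>L_0$ and the B-process bound for $L\le L_0$, invoking $L^2D\le x$ for the range of $L$. Choose $L_0$, a power of $x$ times powers of $H$ and $D$, to balance the dominant terms $(X/(DL^2))^{1/4}LD$ against the corresponding dual main term. The exponents $11/30$ and $7/30$ suggest balancing leading to $x^{1/30}$-type quantities, so I will solve the linear system for the exponents and check that each of the seven stated terms dominates the leftover terms in its range.

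The main obstacle is this last bookkeeping step, together with verifying the hypotheses of the B-process uniformly in $h,d_1,d_2$ (monotonicity and the size of $f'$ at the endpoints). Getting the boundary terms into the claimed shape via Lemma \ref{lem:min} is where I expect the most care to be needed.
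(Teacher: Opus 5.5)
Your architecture matches the paper's: a direct bound from the perturbed-product theorems, a B-process in $l$, and a split in $L$. As written, though, the plan cannot reach the stated bound.

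The main problem is that the direct estimate must come from Theorem~\ref{S_1^{dag}(H,M,N)}, not Theorem~\ref{S_1(H,M,N)}. For fixed $h$, the paper bounds the sum by $\sum_{d_1,d_2}\bigl|\sum_{l}e(\cdot)\bigr|^{*}$ and applies Theorem~\ref{S_1^{dag}(H,M,N)} with $(X,M,N,H,\beta,\alpha)=(hx/(LD),D_1,D_2,L,-1,-1)$. This gives $x^{1/4}H^{1/4}D^{1/2}L^{1/4}+DL^{1/2}+D^2L^2/(Hx)$. Your bound keeps the $(MN)^{-1/4}$ term, i.e.\ an extra $LD^{3/4}$, and nothing in the proposition absorbs it. For $D\asymp 1$ it is just the trivial bound $L$. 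Even if you balance it against the B-process main term $x^{1/2}H^{1/2}L^{-1/4}D^{1/8}$ at $D=H=1$, you only get $x^{2/5}$, whereas the proposition claims $x^{23/60+\varepsilon}$. In the proof of Theorem~\ref{S_1^{dag}(H,M,N)}, that term is absorbed when the phase exceeds the square of the inner length and is removed via Lemma~\ref{dependence} below that; this is exactly what the max-partial-sum version buys.

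Your split is also backwards. The direct bound grows with $L$. After the B-process, the dual $r$-sum has length $\asymp hx/(DL^2)$ and the $T/M^2$ error contributes $Hx/L^3$, so the B-process only pays off for large $L$. The paper uses the direct bound for $L^2D\leqslant x^{14/15}$ and the B-process for $x^{14/15}<L^2D\leqslant x$. With your assignment, the term $Hx/L^3$ alone is of size $Hx$ at $L=1$.

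Your treatment of the dual sum also misses the step that produces $x^{23/60}H^{1/2}D^{1/4}$ and $x^{4/15}H^{1/2}D^{1/2}$. Applying Theorem~\ref{S_1^{dag}(H,M,N)} to the dual sum, with $r$ inner and $d_1d_2+\delta$ as the perturbed block, gains nothing. A direct computation returns the same $x^{1/4}H^{1/4}D^{1/2}L^{1/4}+DL^{1/2}$ as before dualizing. Invoking the $|\delta|\leqslant MN$ results for $\delta=1$ does not help either, since they keep $d_1d_2+\delta$ as one block.

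What is needed is to separate the two variables. The paper writes $(d_1d_2+\delta)^{-1/2}=d_2^{-1/2}(d_1+\delta/d_2)^{-1/2}$ and removes the amplitude by partial summation in $d_1$. It then applies Lemma~\ref{three-dimensional exponential sums} with $d_1$ (exponent $-1/2$) as the inner variable and $(r,d_2)$ as the outer ones. The result is
\[
x^{1/2}H^{1/2}L^{-1/4}D_2^{1/4}+x^{1/2}H^{1/2}L^{-1/2}D_2^{1/2}+x^{-1/2}H^{-1/2}L^{1/2}D^{3/2}.
\]
By the symmetry $d_1\leftrightarrow d_2$, the outer length may be taken to be $\min\{D_1,D_2\}\ll D^{1/2}$. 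Only then does $L^2D>x^{14/15}$ give the stated exponents.

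Two attributions in your bookkeeping are also off. The term $x^{-1/4}H^{-1/2}D^{5/4}$ comes from the $X^{-1}$ term of that lemma, not from the boundary terms. The term $x^{-2/5}HD^{3/2}$ comes from $Hx/L^3$.
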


\begin{proof}

    We divide the proof of Proposition \ref{proposition} into two cases.

{\bf{Case I: $L^2D\leqslant x^{14/15}$.} }

We write
$$
\frac{hx}{l(d_1d_2+\delta)}=\frac{hx}{LD}\cdot \frac{l^{-1}(d_1d_2+\delta)^{-1}}{L^{-1}D_1^{-1}D_2^{-1}}.
$$
 Applying Theorem \ref{S_1^{dag}(H,M,N)} with  
 $$
 (X,M,N,H,\beta, \alpha)=(hx/(LD), D_1,D_2,L,-1,-1).
 $$
We get 
\begin{equation}\label{S1}
\begin{split}
\mathfrak{S}(x,D,H)
    &\ll\sum_{{h\sim H}}\frac{1}{h}\left\{\left(\frac{hx}{LD}/(DL^2)\right)^{\frac{1}{4}}+L^{-\frac{1}{2}}+\frac{DL}{hx}\right\} (DL)^{1+\varepsilon}\\ 
     &\ll\left\{x^{\frac{1}{4}}H^{\frac{1}{4}}D^{\frac{1}{2}}L^{\frac{1}{4}}+DL^{\frac{1}{2}}+\frac{D^2L^2}{Hx}\right\} x^{\varepsilon}\\ 
     &\ll\left\{x^{\frac{11}{30}}H^{\frac{1}{4}}D^{\frac{3}{8}}+x^{\frac{7}{30}}D^{\frac{3}{4}}+x^{-\frac{1}{15}}H^{-1}D\right\} x^{\varepsilon}.
\end{split}
\end{equation}
The final inequality follows from the assumption $L^2D \leqslant x^{14/15}$.

{\bf{Case II: $x^{14/15}<L^2D\leqslant x$.} }

Applying the Lemma \ref{lem:improved B process} to the sum over $l$, and combining the variables $d_1$ and $d_2$ in error terms, we have
\begin{eqnarray*}
    &&\mathfrak{S}(x,D,H)\\
    &&\ll\Bigg|\sum_{h\sim H}\frac{1}{h}\sum_{d_1\sim D_1}\sum_{d_2\sim D_2}\sum_{\frac{hx}{4d_1d_2L^2}<r\leqslant\frac{hx}{d_1d_2L^2}}e\left(2\sqrt{\frac{hxr}{d_1d_2+\delta}}\right)\cdot \left(\frac{hx}{(d_1d_2+\delta)r^3}\right)^{\frac{1}{4}}\Bigg|\\
    &&+\sum_{h\sim H}\frac{1}{h}\sum_{d\sim D} R(L,d,h)\\
    &&+ \sum_{h\sim H}\frac{1}{h}\sum_{d\sim D}\Bigg(\log{\bigg(\frac{hx}{4(d+\delta)L^2}-\frac{hx}{(d+\delta)L^2}+2\bigg)}+\frac{(d+\delta)L^2}{hx}+\frac{hx}{(d+\delta)L^3}+1\Bigg),
\end{eqnarray*}
where
$$
R(L,d,h)=\min \left\{\left(\frac{hx}{d+\delta}\right)^{-\frac{1}{2}}L^{\frac{3}{2}},\; \frac{1}{\|\frac{hx}{(d+\delta)L^2}\|}\right\}.
$$
For the error term on the second line, Lemma \ref{lem:min} then gives
\begin{equation*}
\begin{split}
\sum_{h\sim H}\frac{1}{h}\sum_{d\sim D} R(L,d,h)
    &\ll\sum_{h\sim H}\frac{1}{h}\left\{x^{\frac{1}{2}}h^{\frac{1}{2}}D^{-\frac{1}{2}}L^{-\frac{1}{2}}+D+x^{-\frac{1}{2}}h^{-\frac{1}{2}}D^{\frac{1}{2}}L^{\frac{3}{2}}+\frac{D^2L^2}{hx}\right\}x^{\varepsilon}\\
    &\ll\bigg(x^{\frac{1}{2}}H^{\frac{1}{2}}D^{-\frac{1}{2}}L^{-\frac{1}{2}}+D+x^{-\frac{1}{2}}H^{-\frac{1}{2}}D^{\frac{1}{2}}L^{\frac{3}{2}}\bigg)x^\varepsilon.
\end{split}
\end{equation*}
The last inequality holds, provided that $L^2D \leqslant x$.
The contribution of the error term on the last line above
$$O\left(Dx^{\varepsilon}+\frac{Hx^{1+\varepsilon}}{L^3}\right).$$
Combining the estimates above, we have
\begin{equation*}
\begin{split}
 \mathfrak{S}(x,D,H)
    \ll&\;\Bigg|\sum_{h\sim H}\frac{1}{h}\sum_{d_1\sim D_1}\sum_{d_2\sim D_2}\sum_{\frac{hx}{4d_1d_2L^2}<r\leqslant\frac{hx}{d_1d_2L^2}}e\left(2\sqrt{\frac{hxr}{d_1d_2+\delta}}\right)\cdot \left(\frac{hx}{(d_1d_2+\delta)r^3}\right)^{\frac{1}{4}}\Bigg|\\
    &+\bigg(\frac{Hx}{L^3}+x^{\frac{1}{2}}H^{\frac{1}{2}}D^{-\frac{1}{2}}L^{-\frac{1}{2}}+D+x^{-\frac{1}{2}}H^{-\frac{1}{2}}D^{\frac{1}{2}}L^{\frac{3}{2}}\bigg)x^\varepsilon.
\end{split}
\end{equation*}
Interchanging the orders of summation and applying the triangle inequality, the quadruple sum in absolute values is 
$$
\ll x^{-\frac{1}{2}}H^{-\frac{3}{2}}D^{\frac{3}{4}}L^{\frac{3}{2}}\sum_{h\sim H}\sum_{\frac{hx}{16DL^2}<r\leqslant \frac{hx}{DL^2}}\left|\sum_{d_2\sim D_2}\sum_{d_1\sim D_1}e\left(2\sqrt{\frac{hxr}{d_1d_2+\delta}}\right)\cdot \left(\frac{1}{d_1d_2+\delta}\right)^{\frac{1}{4}}\right|
$$
At this point, we may apply the triangle inequality once more and then apply a partial summation to the variable $d_1$. Note that by symmetry, we may apply this and the following argument to the variable $d_2$ as well. We have
\begin{equation}\label{mathfrac{S}}
\begin{split}
\mathfrak{S}(x,D,H)
    \ll&\; x^{-\frac{1}{2}}H^{-\frac{3}{2}}D^{\frac{1}{2}}L^{\frac{3}{2}}\sum_{h\sim H}\sum_{\frac{hx}{16DL^2}<r\leqslant \frac{hx}{DL^2}}\sum_{d_2\sim D_2}\left|\sum_{d_1\sim D_1}e\left(2\sqrt{\frac{hxr}{d_1d_2+\delta}}\right)\right|^*\\
    &+\bigg(\frac{Hx}{L^3}+x^{\frac{1}{2}}H^{\frac{1}{2}}D^{-\frac{1}{2}}L^{-\frac{1}{2}}+D+x^{-\frac{1}{2}}H^{-\frac{1}{2}}D^{\frac{1}{2}}L^{\frac{3}{2}}\bigg)x^\varepsilon.
\end{split}
\end{equation}
Set 
$$
\mathfrak{S}^*(H,D_1,D_2,L):=\sum_{h\sim H}\sum_{\frac{hx}{16DL^2}<r\leqslant \frac{hx}{DL^2}}\sum_{d_2\sim D_2}\left|\sum_{d_1\sim D_1}e\left(2\sqrt{\frac{hxr}{d_1d_2+\delta}}\right)\right|^*.
$$
Note that
$$
\sqrt{\frac{hxr}{d_1d_2+\delta}}=\frac{xh}{DL}\cdot \frac{r^{\frac{1}{2}}d_2^{-\frac{1}{2}}(d_1+\frac{\delta}{d_2})^{-\frac{1}{2}}}{\left(\frac{hx}{DL^2}\right)^{\frac{1}{2}}D_2^{-\frac{1}{2}}D_1^{-\frac{1}{2}}}.
$$
Setting $d'_1 = d_1 + \frac{\delta}{d_2}$ with $\delta \in \{0, 1\}$, we have $d'_1 \sim D_1$. Thus, we may write
$$
\mathfrak{S}^*(H,D_1,D_2,L)=\sum_{h\sim H}\sum_{\frac{hx}{16DL^2}<r\leqslant \frac{hx}{DL^2}}\sum_{d_2\sim D_2}\left|\sum_{d_1^{'}\sim D_1}e\left(2\frac{xh}{DL}\cdot \frac{r^{\frac{1}{2}}d_2^{-\frac{1}{2}}d_1^{'-\frac{1}{2}}}{\left(\frac{hx}{DL^2}\right)^{\frac{1}{2}}D_2^{-\frac{1}{2}}D_1^{-\frac{1}{2}}}\right)\right|^*.
$$
We now apply Lemma \ref{three-dimensional exponential sums} with $(X,H,M,N)=(2xh/(DL), xh/(DL^2), D_1, D_2)$
to the triple sum over $r$, $d_1^{'}$, $d_2$ to see that
\begin{eqnarray*}
   \mathfrak{S}^*(H,D_1,D_2,L)\ll \sum_{h\sim H}\frac{hx}{L^2}\left(L^{\frac{1}{4}}D^{-\frac{1}{4}}D_1^{-\frac{1}{4}}+D_1^{-\frac{1}{2}}+\frac{DL}{xh}\right)x^{\varepsilon}.
\end{eqnarray*}
Thus, the first line of \eqref{mathfrac{S}} is 
\begin{eqnarray*}
&&\ll    x^{-\frac{1}{2}}H^{-\frac{3}{2}}D^{\frac{1}{2}}L^{\frac{3}{2}}\sum_{h\sim H}\frac{hx}{L^2}\left(L^{\frac{1}{4}}D^{-\frac{1}{4}}D_1^{-\frac{1}{4}}+D_1^{-\frac{1}{2}}+\frac{DL}{xh}\right)x^{\varepsilon}\\
&&\ll \bigg(x^{\frac{1}{2}}H^{\frac{1}{2}}L^{-\frac{1}{4}}D_2^{\frac{1}{4}}+x^{\frac{1}{2}}H^{\frac{1}{2}}L^{-\frac{1}{2}}D_2^{\frac{1}{2}}+x^{-\frac{1}{2}}H^{-\frac{1}{2}}L^{\frac{1}{2}}D^{\frac{3}{2}}\bigg)x^\varepsilon.
\end{eqnarray*}
The final inequality follows from the fact that $D_1 D_2 \sim D$.
As we noted above, by the symmetry of the variables $d_1$ and $d_2$, the first line of \eqref{mathfrac{S}} is also
$$
\ll \bigg( x^{\frac{1}{2}}H^{\frac{1}{2}}L^{-\frac{1}{4}}D_1^{\frac{1}{4}}+x^{\frac{1}{2}}H^{\frac{1}{2}}L^{-\frac{1}{2}}D_1^{\frac{1}{2}}+x^{-\frac{1}{2}}H^{-\frac{1}{2}}L^{\frac{1}{2}}D^{\frac{3}{2}}\bigg)x^{\varepsilon}.
$$
Since $\min\{D_1,D_2\}\ll D^{\frac{1}{2}}$, we conclude that
\begin{equation}\label{S2}
\begin{split}
 \mathfrak{S}(x,D,H)
    \ll&  \bigg(x^{\frac{1}{2}}H^{\frac{1}{2}}L^{-\frac{1}{4}}D^{\frac{1}{8}}+x^{\frac{1}{2}}H^{\frac{1}{2}}L^{-\frac{1}{2}}D^{\frac{1}{4}}+x^{-\frac{1}{2}}H^{-\frac{1}{2}}L^{\frac{1}{2}}D^{\frac{3}{2}}\\ 
    &+\frac{Hx}{L^3}+x^{\frac{1}{2}}H^{\frac{1}{2}}D^{-\frac{1}{2}}L^{-\frac{1}{2}}+D+x^{-\frac{1}{2}}H^{-\frac{1}{2}}D^{\frac{1}{2}}L^{\frac{3}{2}}\bigg)x^\varepsilon\\ 
    \ll& \bigg(x^{\frac{23}{60}}H^{\frac{1}{2}}D^{\frac{1}{4}}
    +x^{\frac{4}{15}}H^{\frac{1}{2}}D^{\frac{1}{2}}
    +x^{-\frac{1}{4}}H^{-\frac{1}{2}}D^{\frac{5}{4}}+ D
    +x^{-\frac{2}{5}}HD^{\frac{3}{2}}\bigg)x^{\varepsilon}
\end{split}
\end{equation}
provided $x^{14/15}<L^2D\leqslant x$.
Together with \eqref{S1} and \eqref{S2}, this completes the proof of Proposition \ref{proposition}.
 
\end{proof}

\subsection{Proof of Theorem \ref{Main Theorem}}

\leavevmode \\

Let $N$ be a parameter that will be chosen later. Write
 $$
 T(x)=\sum_{n\leqslant x}\tau\left(\left[\frac{x}{n}\right]\right)\tau(n)=T_1(x)+T_2(x),
 $$
where 
 $$
 T_1(x):=\sum_{n\leqslant N}\tau\left(\left[\frac{x}{n}\right]\right)\tau(n),\quad T_2(x):=\sum_{N<n\leqslant  x}\tau\left(\left[\frac{x}{n}\right]\right)\tau(n).
 $$
Since $\tau(n)\ll n^{\varepsilon}$, we have
\begin{eqnarray}\label{T_1}
     T_1(x)=\sum_{n\leqslant N}\tau\left(\left[\frac{x}{n}\right]\right)\tau(n)\ll Nx^{\varepsilon}.
\end{eqnarray}
We have the familiar equivalence
 $$
 d=\left[\frac{x}{n}\right]\quad \Longleftrightarrow\quad \frac{x}{d+1}<n \leqslant \frac{x}{d},
 $$
and so
 $$
 T_2(x)=\sum_{d\leqslant \frac{x}{N}}\tau(d)\sum_{\frac{x}{d+1}<n\leqslant\frac{x}{d+1}}\tau(n).
 $$
Let $\Delta(x)$ be the term in Dirichlet's divisor problem, i.e.
 $$
 \sum_{n\leqslant x}\tau(x)=x\log{x}+(2\gamma-1)x+\Delta(x). 
 $$
Then
 $$
 T_2(x)=T_{21}(x)-T_{22}(x)+T_{\Delta}(x),
 $$
where
\begin{eqnarray*}
    &&T_{21}(x):=x(\log{x}+2\gamma-1)\sum_{d\leqslant\frac{x}{N}}\frac{\tau(d)}{d(d+1)},\\
    &&T_{22}(x):=x\sum_{d\leqslant \frac{x}{N}}\tau(d)\left(\frac{\log{d}}{d}-\frac{\log{(d+1)}}{d+1}\right),\\
    &&T_{\Delta}(x):=\sum_{d\leqslant \frac{x}{N}}\tau(d)\left(\Delta\left(\frac{x}{d}\right)-\Delta\bigg(\frac{x}{d+1}\bigg)\right).
\end{eqnarray*}
A straightforward calculation shows that
 $$
 \sum_{d>\frac{x}{N}}\frac{\tau(d)}{d(d+1)}\ll Nx^{\varepsilon-1}, \qquad \sum_{d>\frac{x}{N}}\tau(d)\left(\frac{\log{d}}{d}-\frac{\log{(d+1)}}{d+1}\right)\ll Nx^{\varepsilon-1},
 $$
and therefore
$$
T_2(x)=C_1x(\log{x}+2\gamma-1)-C_3x+T_{\Delta}(x)+O(Nx^{\varepsilon})
$$
where
$$
C_1=\sum_{d=1}^{\infty}\frac{\tau(d)}{d(d+1)}, \quad C_3=\sum_{d=1}^{\infty}\tau(d)\left(\frac{\log{d}}{d}-\frac{\log{(d+1)}}{d+1}\right).
$$
Combining this with \eqref{T_1}, we obtain
\begin{eqnarray}\label{TT}
    T(x)=C_1x\log{x}+C_2x+T_{\Delta}(x)+O(Nx^{\varepsilon})
\end{eqnarray}
with $C_2=C_1(2\gamma-1)-C_3$.

For $T_\Delta(x)$, according to Lemma \ref{delta(x)} and the definition of the divisor function, we obtain
\begin{eqnarray}\label{T_{delta}ll}
    T_{\Delta}(x)
    \ll x^{\varepsilon}\max_{\substack{D\leqslant \frac{x}{N}\\ L^2D\ll x}}\left|\sum_{d_1\sim D_1}\sum_{d_2\sim D_2}\sum_{l\sim L}\psi\left(\frac{x}{l(d_1d_2+\delta)}\right)\right|+O\left(\frac{x^{1+\varepsilon}}{N}\right)
\end{eqnarray}
where $\delta=\{0,1\}$, $D_1D_2\sim D$.
To estimate $T_\Delta(x)$, we consider the following sum
$$
S_\delta(x,N)=\sum_{d_1\sim D_1}\sum_{d_2\sim D_2}\sum_{l\sim L}\psi\left(\frac{x}{l(d_1d_2+\delta)}\right).
$$
Now applying Lemma \ref{psi}, we get
\begin{equation*}
\begin{split}
S_{\delta}(x,N)
    =&-\frac{1}{2\pi i}\sum_{1\leqslant|h|\leqslant H}\Phi\left(\frac{h}{H+1}\right)\frac{1}{h}\sum_{d_1\sim D_1}\sum_{d_2\sim D_2}\sum_{l\sim L}e \left(\frac{hx}{l(d_1d_2+\delta)}\right)\\
    &+\sum_{d_1\sim D_1}\sum_{d_2\sim D_2}\sum_{l\sim L}R_H \left(\frac{x}{l(d_1d_2+\delta)}\right)\\
    \ll& x^{\varepsilon}\max _{1\leqslant H_0\leqslant H}\left|\sum_{h\sim H_0}\frac{1}{h}\sum_{d_1\sim D_1}\sum_{d_2\sim D_2}\sum_{l\sim L}e \left(\frac{hx}{l(d_1d_2+\delta)}\right)\right|+\frac{LD}{H}.
\end{split}
\end{equation*}
Applying Proposition \ref{proposition} to the sum within the absolute value above, and using condition $L^2D \ll x$, we have
\begin{equation*}
\begin{split}
S_{\delta}(x,N)
    &\ll  x^{\varepsilon}\max _{H_0\leqslant H}\bigg\{
    x^{\frac{11}{30}}H_0^{\frac{1}{4}}D^{\frac{3}{8}}
    +x^{\frac{7}{30}}D^{\frac{3}{4}}
    +x^{\frac{23}{60}}H_0^{\frac{1}{2}}D^{\frac{1}{4}}
        +x^{\frac{4}{15}}H_0^{\frac{1}{2}}D^{\frac{1}{2}}\\
        &\qquad \qquad\qquad+x^{-\frac{1}{4}}H_0^{-\frac{1}{2}}D^{\frac{5}{4}}+D
        +x^{-\frac{2}{5}}H_0D^{\frac{3}{2}}\bigg\}+\frac{LD}{H}\\
        &\ll x^{\varepsilon}\bigg(
        x^{\frac{11}{30}}H^{\frac{1}{4}}D^{\frac{3}{8}}
    +x^{\frac{7}{30}}D^{\frac{3}{4}}
    +x^{\frac{23}{60}}H^{\frac{1}{2}}D^{\frac{1}{4}}
        +x^{\frac{4}{15}}H^{\frac{1}{2}}D^{\frac{1}{2}}\\
         &\quad \quad\quad+x^{-\frac{1}{4}}D^{\frac{5}{4}}+D
        +x^{-\frac{2}{5}}HD^{\frac{3}{2}}\bigg)+\frac{x^{\frac{1}{2}}D^{\frac{1}{2}}}{H}.
\end{split}
\end{equation*}
Combining the above with \eqref{TT} and \eqref{T_{delta}ll}, we obtain
\begin{equation*}
\begin{split}
T(x)=&C_1x\log{x}+C_2x+O\Bigg(Nx^{\varepsilon}+x^{1+\varepsilon}N^{-1}+ x^{\varepsilon}\max_{D\leqslant \frac{x}{N}}\bigg\{
        x^{\frac{11}{30}}H^{\frac{1}{4}}D^{\frac{3}{8}}
    +x^{\frac{7}{30}}D^{\frac{3}{4}}\\
    &+x^{\frac{23}{60}}H^{\frac{1}{2}}D^{\frac{1}{4}}+x^{\frac{4}{15}}H^{\frac{1}{2}}D^{\frac{1}{2}}+ x^{-\frac{1}{4}}D^{\frac{5}{4}}+D
        +x^{-\frac{2}{5}}HD^{\frac{3}{2}}+\frac{x^{\frac{1}{2}}D^{\frac{1}{2}}}{H}\bigg\}\Bigg).
\end{split}
\end{equation*}
Taking the maximum over $D \leqslant \frac{x}{N}$, it follows that
\begin{equation*}
\begin{split}
T(x)=&
    C_1x\log{x}+C_2x
    +O\bigg(x^\varepsilon N+x^{1+\varepsilon}N^{-1}
    +x^{\frac{89}{120}+\varepsilon}H^{\frac{1}{4}}N^{-\frac{3}{8}}
    +x^{\frac{59}{60}+\varepsilon}N^{-\frac{3}{4}}\\
    &+x^{\frac{19}{30}+\varepsilon}H^{\frac{1}{2}}N^{-\frac{1}{4}}
    +x^{\frac{23}{30}+\varepsilon}H^{\frac{1}{2}}N^{-\frac{1}{2}}
    +x^{1+\varepsilon}N^{-\frac{5}{4}}
    +x^{\frac{11}{10}+\varepsilon}HN^{-\frac{3}{2}}
    +x^{1+\varepsilon}H^{-1}N^{-\frac{1}{2}}\bigg)\\
        =& C_1x\log{x}+C_2x+O\left(x^{\frac{17}{30}+\varepsilon}\right).
\end{split}
\end{equation*}
Optimizing this expression by choosing
$$
N=x^{\frac{17}{30}}, \quad H=x^{\frac{3}{20}},
$$
we complete the proof of Theorem \ref{Main Theorem}.

\leavevmode \\

\noindent \textbf{Acknowledgments.} 
The author expresses sincere gratitude to Professor Kui Liu and Professor Meselem Karras for their valuable suggestions and insightful comments, which greatly improved the quality of this manuscript. Special thanks are also due to Joshua Stucky for his helpful discussions and contributions during the early stages of this work.

\end{document}